\newcommand{\bbN}{\mathbb{N}}
\newcommand{\bbR}{\mathbb{R}}
\newcommand{\suchthat}{\,|\,} 
\DeclareMathOperator{\id}{id} 
\DeclareMathOperator{\re}{Re} 
\newcommand{\argument}{\mathord{\,\cdot\,}} 
\newcommand{\dxInt}{\;\mathrm{d}} 
\newcommand{\norm}[1]{\left\lVert #1 \right\rVert} 
\newcommand{\modulus}[1]{\left\lvert #1 \right\rvert} 
\DeclareMathOperator{\dom}{D} 
\newcommand{\spec}{\sigma} 
\newcommand{\spb}{\operatorname{s}} 
\newcommand{\gbd}{\operatorname{\omega_0}} 
\newcommand{\Res}{\mathcal{R}} 
\newcommand{\impliesProof}[2]{``\ref{#1} $\Rightarrow$ \ref{#2}''}
\newcommand{\goesru}{\xrightarrow{\operatorname{ru}}}
\newcommand{\goesorder}{\xrightarrow{\operatorname{o}}}
\newcommand{\ob}{{\operatorname{ob}}} 
\theoremstyle{definition}
\newtheorem{definition}{Definition}[section]
\newtheorem{example}[definition]{Example}
\theoremstyle{plain}
\newtheorem{proposition}[definition]{Proposition}
\newtheorem{lemma}[definition]{Lemma}
\newtheorem{theorem}[definition]{Theorem}
\newtheorem{corollary}[definition]{Corollary}
\numberwithin{equation}{section}
\begin{document}

\title[Order continuity of semigroups]{Order boundedness and order continuity properties of positive operator semigroups}
\author{Jochen Gl\"uck}
\address[J.\ Gl\"uck]{University of Wuppertal, School of Mathematics and Natural Sciences, Gaußstr.\ 20, 42119 Wuppertal, Germany}
\email{glueck@uni-wuppertal.de}
\author{Michael~Kaplin}
\address[M.\ Kaplin]{Institute of Mathematics, Physics and Mechanics,
Jadranska 19,
1000 Ljubljana,
Slovenija, 
www.researchgate.net/profile/Michael-Kaplin}
\subjclass[2020]{47D06; 47B65; 46B42; 46A40}
\keywords{ru-continuous semigroup; relatively uniform convergence; order convergence; maximal inequality; positive $C_0$-semigroup}
\date{\today}
\begin{abstract}
	Relatively uniformly continuous (ruc) semigroups were recently introduced and studied 
	by Kandić, Kramar-Fijavž, and the second-named author, 
	in order to make the theory of one-parameter operator semigroups available in the setting of vector lattices, 
	where no norm is present in general. 
	
	In this article, we return to the more standard Banach lattice setting 
	-- where both ruc semigroups and $C_0$-semigroups are well-defined concepts -- 
	and compare both notions. 
	We show that the ruc semigroups are precisely those positive $C_0$-semigroups 
	whose orbits are order bounded for small times.
	
	We then relate this result to three different topics:
	(i)~equality of the spectral and the growth bound for positive $C_0$-semigroups; 
	(ii) a uniform order boundedness principle which holds for all operator families between Banach lattices;
	and (iii) a description of unbounded order convergence in terms of almost everywhere convergence
	for nets which have an uncountable index set containing a co-final sequence.
\end{abstract}

\maketitle

\section{Introduction} 
\label{section:introduction}

On Banach lattices, a rich theory of positive $C_0$-semigroups is available and is, 
while its origins date back more than 40 years 
(see for instance \cite{Nagel1986} for the state of the art in the middle of the 1980s), 
still an active area of research 
-- see for instance \cite{ArnoldCoine2023, Vogt2022} for two recent contributions 
about the growth behaviour of positive (and more generally, \emph{eventually positive}) semigroups, 
and \cite{Arora2022, Mui2023} for two recent contributions 
about the quite subtle topic of \emph{locally eventually positive} semigroups.
An excellent introduction to the theory of positive semigroups can be found in the recent book 
\cite{BatkaiKramarRhandi2017}. 

The significance of order structure and positivity in the study of $C_0$-semigroups makes it tempting 
to develop a similar theory on the more general class of vector lattices 
without any norm.
However, since many fundamental concepts in the theory of $C_0$-semigroups 
-- such as generators and their integral characterizations \cite[Chapter~II]{EngelNagel2000} -- 
are formulated by using vector-valued calculus, 
such an endeavour is only possible if one succeeds to replace these topological notions 
by lattice theoretic substitutes. 
If one does so by using, more specifically, the concept of relatively uniform convergence in vector lattices,
one arrives at the notion of \emph{relatively uniformly continuous semigroups} 
(for short: \emph{ruc semigroups}); 
these were introduced and studied in \cite{KandicKaplin2020, KaplinKramer2020}.

When one comes back to the more specific setting of Banach lattices, 
then both notions -- ruc semigroups and $C_0$-semigroups -- are defined, 
and it is natural to compare both of them.
It is very easy to see that every ruc semigroup is $C_0$; 
the main purpose of this article is to characterize under which conditions, 
conversely, a positive $C_0$-semigroup is ruc 
(Theorem~\ref{thm:characterization}).

\subsection*{Organization of the article}

In the remaining part of the introduction we briefly recall a bit of terminology and notation.
In Section~\ref{section:order-reg-sg} we prove our main result, Theorem~\ref{thm:characterization}, 
which says among other things that a positive $C_0$-semigroup is ruc if and only if 
its orbits are locally order bounded for small times; 
moreover, we briefly discuss how one can obtain several examples of ruc semigroups from this result. 
In Section~\ref{section:growth} we show that, for a ruc semigroup on a Banach lattice, 
the growth bound and the spectral bound always coincide.

The remaining two sections are not directly related to operator semigroups, 
but provide additional context for Theorem~\ref{thm:characterization}: 
in Section~\ref{section:uniform-bdd} we prove a uniform order boundedness result, 
which shows that for operator families with order bounded orbits, 
the order bound can always be chosen to satisfy a certain norm estimate.
In the final Section~\ref{section:almost-everywhere} we consider nets 
whose index sets are uncountable, but have a countable co-final subset 
(a typical example of such an index set being $(0,\varepsilon]$ with the converse of the usual order);
for those we show how a weaker version of order convergence in $L^p$ 
-- so-called \emph{unbounded order convergence} -- 
can be described by means of almost everywhere convergence, 
provided that the representatives of the functions in the net are chosen carefully.
As a consequence, ruc semigroups on $L^p$ have an almost everywhere continuity property at the time $0$, 
even if no a priori assumption is made to enable a canonical choice of representatives.

\subsection*{Order convergence and relatively uniform convergence}

We assume the reader to be familiar with the basics of vector lattice and Banach lattice theory;
standard references for this theory are, for instance, 
the monographs \cite{LuxemburgZaanen1971, MeyerNieberg1991, Schaefer1974, Zaanen1983}.

Let us recall a few order theoretic notions of convergence. 
Let $E$ be an Ar\-chi\-me\-de\-an vector lattice.
A net $(x_j)_{j \in J}$ in $E$ is said to be \emph{order convergent} to a point $x \in E$ 
-- which we denote by $x_j \goesorder x$ -- 
if there exists a decreasing net $(y_h)_{h \in H}$ in $E_+$ which has infimum $0$ and satisfies the following property:
for each $h \in H$ there exists $j_0 \in J$ such that $\modulus{x_j - x} \le y_h$ for all $j \succeq j_0$.
The definition of order convergence is a bit subtle, in particular since a slightly different (and non-equivalent) definition 
occurs in some places in the literature; 
a detailed comparison between the two definitions is given in \cite{AbramovichSirotkin2005}.

A net $(x_j)_{j \in J}$ in $E$ is said to be \emph{relatively uniformly convergent} 
(for short: \emph{ru-convergent}) 
to a point $x \in E$ -- which we denote by $x_j \goesru x$ -- if there exists a vector $u \in E_+$
such that the following holds: 
for every number $\varepsilon > 0$ there exists an index $j_0 \in J$ 
such that $\modulus{x_j - x} \le \varepsilon u$ for all $j \succeq j_0$; 
in this case the vector $u$ is called a \emph{regulator} of the relatively uniform convergence 
of the net $(x_j)_{j \in J}$ to $x$.

Due to the Archimedean property of $E$, limits of order convergent or relatively uniformly convergent nets 
are uniquely determined.
It is clear that relatively uniform convergence implies order convergence
(with the same limit).
If $E$ is a Banach lattice with order continuous norm, the converse implication also holds 
\cite[Proposition~3]{DabboorasadEmelyanovMarabeh2018}.

\subsection*{Relatively uniformly continuous (ruc) semigroups}

A family of positive linear operators $(T_t)_{t \ge 0}$ in the Archimedean vector lattice $E$ 
is called a \emph{positive operator semigroup} if $T_0 = \id_E$ and $T(s+t) = T(s)T(t)$ for all $s,t \in [0,\infty)$.
Such a positive operator semigroup is called \emph{relatively uniformly continuous}, for short \emph{ruc}, 
if one has $T(t)x \goesru x$ as $t \downarrow 0$ for each $x \in E$; 
here, the notation ``$T(t)x \goesru x$ as $t \downarrow 0$'' is meant in the sense 
that the net $(T(t)x)_{t \in (0,\infty)}$ ru-converges to $x$, 
where the index set $(0,\infty)$ is directed conversely to the order inherited from $\bbR$.
This property is equivalent to a relatively uniform continuity property of each orbit over the entire time axis,
see \cite[Definition~3.3 and Proposition~3.5]{KandicKaplin2020}.
Relatively uniformly continuous semigroups were introduced in \cite{KandicKaplin2020}, 
and generators of such semigroups were studied, 
by using a relatively uniform version of calculus with values in vector lattices, in \cite{KaplinKramer2020}.

If $E$ is a Banach lattice, then relatively uniform convergence obviously implies norm convergence 
and hence every positive operator semigroup on a Banach lattice which is ruc is also a $C_0$-semigroup.
We assume that the reader is familiar with the basics of $C_0$-semigroup theory; 
an in-depth treatment of this topic can, for instance, be found in the monographs \cite{EngelNagel2000, Pazy1983}.

\subsection*{Further notation}

We use the notation $\bbN := \{1,2,3,\dots\}$ and $\bbN_0 := \bbN \cup \{0\}$.

\section{Order regularity properties of positive $C_0$-semigroups} 
\label{section:order-reg-sg}

Or main result is the following theorem which characterizes the ruc property 
of positive $C_0$-semigroups by local order boundedness of the orbits. 
The theorem also shows that, for positive $C_0$-semigroups, 
relatively uniform continuity is the same as order continuity at the time $0$.

\begin{theorem}[Characterizsation of ruc $C_0$-semigroups]
	\label{thm:characterization}
	Let $T = (T(t))_{t \geq 0}$ be a positive $C_0$-semigroup on a Banach lattice $E$.
	The following assertions are equivalent:
	\begin{enumerate}[label=\upshape(\roman*)]
		\item\label{thm:characterization:itm:ruc} 
		For each $x \in E$ one has $T(t)x \goesru x$ as $t \downarrow 0$, 
		i.e.\ the semigroup $T$ is relatively uniformly continuous.
		
		\item\label{thm:characterization:itm:oc} 
		For each $x \in E$ one has $T(t)x \goesorder x$ as $t \downarrow 0$. 
		
		\item\label{thm:characterization:itm:ob-ind} 
		For each $x \in E$ there exists $t_x > 0$ such that the set 
		$\{T(t)x: \, t \in [0,t_x]\}$ is order bounded in $E$.
		
		\item\label{thm:characterization:itm:ob-unif} 
		For each $x \in E$ and each $t_0 > 0$ the set 
		$\{T(t)x: \, t \in [0,t_0]\}$ is order bounded in $E$.
	\end{enumerate}
\end{theorem}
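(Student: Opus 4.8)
The plan is to establish the cyclic chain \ref{thm:characterization:itm:ruc} $\Rightarrow$ \ref{thm:characterization:itm:oc} $\Rightarrow$ \ref{thm:characterization:itm:ob-ind} $\Rightarrow$ \ref{thm:characterization:itm:ob-unif} $\Rightarrow$ \ref{thm:characterization:itm:ruc}; two of the links are routine and two carry the content. The first implication is immediate, since relatively uniform convergence implies order convergence with the same limit. For the second, I would simply unwind the definition of order convergence: choosing a single member $y_{h_0}$ of the dominating net gives $\modulus{T(t)x - x} \le y_{h_0}$, hence $T(t)x \in [x - y_{h_0},\, x + y_{h_0}]$, for all $t$ in some interval $(0,\delta]$, and adjoining $T(0)x = x$ shows that $\{T(t)x : t \in [0,\delta]\}$ is order bounded.

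The first substantial link is \ref{thm:characterization:itm:ob-ind} $\Rightarrow$ \ref{thm:characterization:itm:ob-unif}, upgrading local order boundedness of an orbit to order boundedness on every compact time interval. Here I would fix $x$ and $t_0$, consider $s^\ast := \sup\{s \in [0,t_0] : \{T(r)x : r \in [0,s]\} \text{ is order bounded}\}$, and use local order boundedness of the orbit emanating from $T(s^\ast)x$ to extend a bound slightly beyond $s^\ast$, contradicting maximality once I know the bound does not blow up \emph{at} $s^\ast$. Controlling the endpoint is the delicate point: the order bounds on $[0,s]$ may a priori grow as $s \uparrow s^\ast$, and I would tame them with the completeness tool described below, which manufactures a genuine bound at $s^\ast$ out of a norm-convergent sequence $T(r_n)x \to T(s^\ast)x$.

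The decisive link is \ref{thm:characterization:itm:ob-unif} $\Rightarrow$ \ref{thm:characterization:itm:ruc}: for each $x$ I must produce a single regulator $u$ witnessing $T(t)x \goesru x$ as $t \downarrow 0$. I would first reduce to $x \ge 0$ by writing $x = x^+ - x^-$, since relatively uniform limits are additive. Two building blocks are available. The first is a completeness tool valid in any Banach lattice: if $z_n \to 0$ in norm, then after passing to a subsequence with $\norm{z_{n_k}} \le 4^{-k}$ the series $u := \sum_k 2^k \modulus{z_{n_k}}$ converges in $E_+$ and yields $\modulus{z_{n_k}} \le 2^{-k} u$, i.e.\ $z_{n_k} \goesru 0$. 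Applied to $z_t := T(t)x - x$ this produces a sequence $t_k \downarrow 0$ and a regulator $u$ with $\modulus{T(t_k)x - x} \le 2^{-k} u$. The second block treats smooth vectors: for $y \in \dom(A)$ one has $T(t)y - y = \int_0^t T(s)Ay \dxInt s$, so $\modulus{T(t)y - y} \le \int_0^t T(s)\modulus{Ay} \dxInt s \le t\,v$ whenever $v$ order bounds $\{T(s)\modulus{Ay} : s \in [0,t_0]\}$ — and such $v$ exists by \ref{thm:characterization:itm:ob-unif}, giving relatively uniform convergence with a linear rate on the dense set $\dom(A)$.

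The main obstacle — and the step I expect to be genuinely hard — is to pass from these partial statements to a single regulator controlling $\modulus{T(t)x - x}$ along the \emph{entire} net $t \downarrow 0$, rather than only along the cofinal sequence $(t_k)$ or only on $\dom(A)$. The naive density argument breaks down irreparably: approximating $x$ in norm by $y \in \dom(A)$ leaves an error $x - y$ that is norm-small but in general not order-small, so the regulators attached to the approximants cannot be amalgamated into one. This is exactly where positivity and the semigroup law must be used together, and it is why the statement is automatic when the norm is order continuous — there order convergence already implies relatively uniform convergence, so \ref{thm:characterization:itm:oc} and \ref{thm:characterization:itm:ruc} coincide without any extra work. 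Concretely, for $t_{k+1} \le t < t_k$ I would insert the subsequential bound into the cocycle identity
\[
  T(t)x - x = \big(T(s')x - x\big) + T(s')\big(T(t_{k+1})x - x\big), \qquad s' := t - t_{k+1},
\]
estimate the second summand by $2^{-(k+1)} u'$ using an order bound $u'$ for the orbit of $u$ (available from \ref{thm:characterization:itm:ob-unif}), and attempt a bootstrap over the shrinking windows $[t_{k+1}, t_k)$. Making this bootstrap genuinely contract — so that the size of $\modulus{T(t)x - x}$ relative to a fixed regulator tends to $0$ uniformly on $(0,\delta]$, instead of merely reshuffling the error across windows — is the crux, and the order preserving character of each $T(s)$, together with the confinement of every orbit to its own order interval, is what I would rely on to force the contraction.
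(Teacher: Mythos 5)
Your proposal does not close the decisive implication, and the gap is exactly where you say it is: the ``bootstrap over shrinking windows'' is circular. For $t \in [t_{k+1}, t_k)$ your cocycle identity reduces the estimation of $\modulus{T(t)x - x}$ to that of $\modulus{T(s')x - x}$ with $s' = t - t_{k+1}$ ranging over $[0, t_k - t_{k+1})$, i.e.\ over a window of the same kind near $0$; nothing contracts, and you concede this yourself. Moreover, your diagnosis that the density argument ``breaks down irreparably'' is mistaken, and repairing it is precisely how the proof closes. Do not approximate $x$ by a \emph{single} $y \in \dom(A)$: take a sequence $(x_n)$ in $\dom(A)$ with $x_n \to x$ in norm and apply your own first building block to the sequence $x_n - x$ (rather than to $T(t)x - x$); after passing to a fast-converging subsequence you get $x_n \goesru x$, so the error $x - x_n$ \emph{is} order small, $\modulus{x - x_n} \le \varepsilon u$ for large $n$. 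Your second building block gives $T(t)x_n \goesru x_n$ for each $n$ with some regulator $u_n$; since $E$ is norm complete, the weighted series $\sum_n u_n / \bigl(2^n(\norm{u_n}+1)\bigr)$ converges and, added to $u$, yields one regulator $u$ common to all these convergences. Finally apply the order boundedness hypothesis to the vector $u$ itself: there are $t_u > 0$ and $v \in E_+$ with $T(t)u \le v$ for $t \in [0,t_u]$. Then for small $t$,
\begin{align*}
	\modulus{T(t)x - x}
	\le T(t)\modulus{x - x_n} + \modulus{T(t)x_n - x_n} + \modulus{x_n - x}
	\le \varepsilon T(t)u + 2\varepsilon u
	\le \varepsilon (v + 2u),
\end{align*}
which is the desired ru-convergence. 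This is exactly the paper's argument for \impliesProof{thm:characterization:itm:ob-ind}{thm:characterization:itm:ruc}; the paper's Lemma~\ref{lem:ru-continuous-on-domain} plays the role of your second building block (proved there via the resolvent, so that it holds for \emph{every} positive $C_0$-semigroup, without assuming order boundedness).

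There is also a gap in your link \impliesProof{thm:characterization:itm:ob-ind}{thm:characterization:itm:ob-unif}: ru-convergence of a subsequence $T(r_n)x \to T(s^\ast)x$ produces a regulator for that sequence only, and does not produce an order bound for the whole orbit on $[0,s^\ast)$ -- an increasing union of order bounded sets need not be order bounded, so your ``completeness tool'' cannot tame the endpoint. This link is, however, easy to repair using positivity and the semigroup law directly: if $\modulus{T(r)x} \le b$ for all $r \in [0,s]$, then for $r \in [0,s]$ one has $\modulus{T(s+r)x} \le T(s)\modulus{T(r)x} \le T(s)b$, so the orbit on $[0,2s]$ is bounded by $b \lor T(s)b$; iterating this doubling reaches any $t_0$, with no supremum argument needed. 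The paper sidesteps this implication entirely: it proves \impliesProof{thm:characterization:itm:ob-ind}{thm:characterization:itm:ruc} directly and obtains \impliesProof{thm:characterization:itm:ruc}{thm:characterization:itm:ob-unif} by citing \cite{KandicKaplin2020}.
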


The crucial implication in the proof is to get from order boundedness of the orbits 
to the relatively uniform continuity.
It is a well-known technique, for instance in proofs of pointwise ergodic theorems, 
that relatively uniform convergence of the orbits of an operator net can be derived 
from the same property on a dense subspace if all orbits are order bounded.
(In this context, note that pointwise ergodic theorems can be formulated in an order theoretic language, 
as was observed very early by Nakano \cite{Nakano1948}.)
Thus, the essence of Theorem~\ref{thm:characterization} is to show relatively uniform continuity 
at the time $0$ for the orbits of all points in a dense set $D$. 
This is done in part~\ref{lem:ru-continuous-on-domain:itm:ru-continuous} of the following lemma.

\begin{lemma}
	\label{lem:ru-continuous-on-domain}
	Let $T = (T(t))_{t \geq 0}$ be a positive $C_0$-semigroup on a Banach lattice $E$ 
	with generator $A: E \supseteq \dom(A) \to E$. 
	Then the following holds:
	\begin{enumerate}[label=\upshape(\alph*)]
		\item\label{lem:ru-continuous-on-domain:itm:order-bdd} 
		There exists a number $\omega \in \bbR$ with the following property: 
		for each $y \in \dom(A)$ there exists $0 \leq z \in \dom(A)$ 
		such that $\modulus{T(t)y} \leq e^{wt}z$ for all $t \geq 0$.
		
		\item\label{lem:ru-continuous-on-domain:itm:ru-continuous} 
		For each $y \in \dom(A)$ one has $T(t)y \goesru y$ 
		with respect to some regulator $0 \le z \in \dom(A)$ as $t \downarrow 0$.
	\end{enumerate}
\end{lemma}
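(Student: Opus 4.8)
The plan is to derive part~\ref{lem:ru-continuous-on-domain:itm:order-bdd} from the Laplace transform representation of the resolvent together with positivity, and then to bootstrap part~\ref{lem:ru-continuous-on-domain:itm:ru-continuous} from it. I would fix a real number $\lambda$ strictly larger than the growth bound of $T$ with $\lambda > 0$, so that $R(\lambda,A) = \int_0^\infty e^{-\lambda s} T(s) \,\dx s$ holds in the strong sense and $R(\lambda,A)$ is a positive operator. For $y \in \dom(A)$ I set $w := (\lambda - A)y$, so that $y = R(\lambda,A)w$, and use that the resolvent commutes with the semigroup to compute $T(t)y = \int_0^\infty e^{-\lambda s} T(t+s)w \,\dx s = e^{\lambda t}\int_t^\infty e^{-\lambda \sigma}T(\sigma)w \,\dx \sigma$. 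Pulling the modulus inside the integral and using positivity of $T(\sigma)$ then gives $\modulus{T(t)y} \le e^{\lambda t} R(\lambda,A)\modulus{w} =: e^{\lambda t} z$ for all $t \ge 0$. Here $z \ge 0$ lies in $\dom(A)$ because it is in the range of $R(\lambda,A)$, and $\omega := \lambda$ is independent of $y$, which proves~\ref{lem:ru-continuous-on-domain:itm:order-bdd}. I would also record the graph-norm estimate $\norm{z} \le \norm{R(\lambda,A)}\,\norm{(\lambda - A)y}$, since this is what makes the extension step below work.

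For part~\ref{lem:ru-continuous-on-domain:itm:ru-continuous} the plan is to treat first the smaller, but still dense, class $\dom(A^2)$, and then to pass to all of $\dom(A)$. If $y \in \dom(A^2)$, then $Ay \in \dom(A)$ and the fundamental theorem of calculus gives $T(t)y - y = \int_0^t T(s)Ay \,\dx s$, hence $\modulus{T(t)y - y} \le \int_0^t T(s)\modulus{Ay}\,\dx s$. Applying part~\ref{lem:ru-continuous-on-domain:itm:order-bdd} to $Ay$ produces $z_{Ay} \in \dom(A)$ with $z_{Ay} \ge 0$ and $T(s)\modulus{Ay} \le e^{\lambda s} z_{Ay}$, and integrating this bound yields $\modulus{T(t)y - y} \le \frac{e^{\lambda t}-1}{\lambda}\, z_{Ay}$. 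As the scalar prefactor tends to $0$ when $t \downarrow 0$, this is precisely $T(t)y \goesru y$ with regulator $z_{Ay} \in \dom(A)$.

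The hard part will be removing this extra regularity, i.e.\ upgrading the statement from $\dom(A^2)$ to all of $\dom(A)$; this is where the genuine content lies. Since $\dom(A^2)$ is a core for $A$ (one may take the explicit approximants $y_n := nR(n,A)y$), I would approximate a given $y \in \dom(A)$ by $(y_n) \subseteq \dom(A^2)$ with $y_n \to y$ and $Ay_n \to Ay$, and split $\modulus{T(t)y - y} \le \modulus{T(t)(y-y_n)} + \modulus{y - y_n} + \modulus{T(t)y_n - y_n}$. For $t \in [0,1]$ the first two summands are dominated, \emph{uniformly in $t$}, by a single element of $\dom(A)$: part~\ref{lem:ru-continuous-on-domain:itm:order-bdd} applied to $y - y_n$ gives $\modulus{T(t)(y-y_n)} \le e^{\lambda t} z_n$ for all $t \ge 0$, and evaluating at $t = 0$ also yields $\modulus{y - y_n} \le z_n$, so that $a_n := (e^\lambda + 1) z_n \in \dom(A)_+$ controls both, with $\norm{a_n} \to 0$ thanks to the graph-norm estimate from part~\ref{lem:ru-continuous-on-domain:itm:order-bdd}. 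The third summand is controlled, for each \emph{fixed} $n$, by $\frac{e^{\lambda t}-1}{\lambda}\, b_n$ with $b_n := z_{Ay_n} \in \dom(A)_+$.

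The obstacle is that the regulators $b_n$ carry no norm control (they involve $A^2 y_n$), so they cannot simply be summed; the resolution is to down-weight them. I would pass to a subsequence with $\norm{a_{n_k}} + \norm{A a_{n_k}} \le 4^{-k}$ and choose weights $d_k > 0$ so small that $d_k\bigl(\norm{b_{n_k}} + \norm{A b_{n_k}}\bigr) \le 4^{-k}$, and then set $u := \sum_k 2^k a_{n_k} + \sum_k d_k\, b_{n_k}$. These series converge in the graph norm, so by closedness of $A$ the sum $u$ lies in $\dom(A)_+$; moreover $a_{n_k} \le 2^{-k} u$ and $b_{n_k} \le d_k^{-1} u$. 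Given $\varepsilon > 0$ I first fix $k$ with $2^{-k} \le \varepsilon/2$, and afterwards choose $t_0$ so small that $\frac{e^{\lambda t}-1}{\lambda}\, d_k^{-1} \le \varepsilon/2$ for $t \le t_0$; combining the three estimates yields $\modulus{T(t)y - y} \le \varepsilon u$ for $t \le t_0$, that is, $T(t)y \goesru y$ with the single regulator $u \in \dom(A)_+$, as required.
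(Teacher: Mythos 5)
Your proof is correct, and its skeleton is the same as the paper's: part~\ref{lem:ru-continuous-on-domain:itm:order-bdd} via the Laplace representation of the resolvent plus positivity, part~\ref{lem:ru-continuous-on-domain:itm:ru-continuous} first on $\dom(A^2)$ via the fundamental theorem of calculus, and then extension to all of $\dom(A)$ by a three-term triangle inequality in which the uncontrolled $\dom(A^2)$-regulators are tamed by a down-weighted series. The execution of the extension step is genuinely different, though. The paper (after rescaling so that the growth bound is negative, so it can work with $\Res(0,A)$ and $\omega = 0$) approximates $-Ay$ in norm by $x_n \in \dom(A)$, passes to a fast subsequence so that $x_n \goesru -Ay$, sets $y_n := \Res(0,A)x_n$, and dominates the error terms uniformly in $t$ by applying part~\ref{lem:ru-continuous-on-domain:itm:order-bdd} to the regulator $u$ of $y_n \goesru y$, via $\modulus{T(t)(y - y_n)} \le \varepsilon\, T(t)u \le \varepsilon \Res(0,A)\modulus{Au}$. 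You instead approximate $y$ in the graph norm by $y_n := n\Res(n,A)y$ and exploit the quantitative estimate $\norm{z_n} \le \norm{\Res(\lambda,A)}\,\norm{(\lambda-A)(y-y_n)} \to 0$, which lets you up-weight the error regulators by $2^k$ while down-weighting the $b_{n_k}$, and you use closedness of $A$ to keep the sum in $\dom(A)$, where the paper instead applies $\Res(0,A)$ to a norm-convergent series. Your route is more quantitative and never needs to upgrade the approximating sequence to ru-convergence; the paper's is a bit shorter because positivity does the work that your graph-norm bookkeeping does.

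Two details you should tighten. First, the subsequence extraction requires $\norm{a_n} + \norm{Aa_n} \to 0$, but you only justify $\norm{a_n} \to 0$; the missing half is immediate from $Az_n = \lambda z_n - \modulus{(\lambda-A)(y-y_n)}$, but it should be said. Second, you assert $T(s)\modulus{Ay} \le e^{\lambda s} z_{Ay}$ ``by applying part~\ref{lem:ru-continuous-on-domain:itm:order-bdd} to $Ay$''; that part only bounds $\modulus{T(s)Ay}$, and $\modulus{Ay}$ need not lie in $\dom(A)$, so this does not literally follow. It is harmless: the chain $\modulus{T(t)y - y} \le \int_0^t \modulus{T(s)Ay} \dxInt s \le \int_0^t e^{\lambda s} z_{Ay} \dxInt s$ already yields your estimate (the stronger pointwise claim is in fact true, via $\modulus{Ay} \le \Res(\lambda,A)\modulus{(\lambda-A)Ay}$ and $T(s)\Res(\lambda,A)v \le e^{\lambda s}\Res(\lambda,A)v$ for $v \ge 0$, but that needs an extra line).
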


\begin{proof}
	We may, and shall, assume throughout the proof that the growth bound of the semigroup 
	is strictly negative
	(the definition of the growth bound is recalled 
	at the beginning of Section~\ref{section:growth}).
	We will use that the resolvent operator $\Res(0,A) := (-A)^{-1}$ 
	is then equal to the Laplace integral $\int_0^\infty T(s) \dxInt s$
	(which is to be understood in the strong sense), 
	and is thus positive.
	
	\ref{lem:ru-continuous-on-domain:itm:order-bdd}
	Since we assumed the growth bound of the semigroup to be strictly negative, 
	we will see that we can take $\omega$ to be equal to $0$. 

	Let $y \in \dom(A)$ and
	define $z := \Res(0,A) \modulus{Ay} \in \dom(A)$. 
	For every $t \geq 0$ we then have
	\begin{align}
		\label{eq:lem:ru-continuous-on-domain:itm:order-bdd:integral}
		\begin{split}
			\modulus{T(t)y} 
			& =
			\modulus{\Res(0,A) T(t) (-A) y}
			=
			\modulus{
				\int_0^\infty T(s+t) (-A) y \dxInt s
			}
			\\
			& \leq 
			\int_t^\infty T(s) \modulus{A y} \dxInt s 
			\leq \Res(0,A) \modulus{A y} 
			= z,
		\end{split}
	\end{align}
	as claimed.
	
	\ref{lem:ru-continuous-on-domain:itm:ru-continuous}
	First consider an arbitrary vector $y \in \dom(A^2)$. 
	For such $y$ we have $Ay \in \dom(A)$, so it follows 
	from~\eqref{eq:lem:ru-continuous-on-domain:itm:order-bdd:integral} that
	$\modulus{T(t)Ay} \le \Res(0,A) \modulus{A^2 y}$ for all $t \geq 0$,
	which in turn yields
	\begin{align}
		\label{eq:lem:ru-continuous-on-domain:itm:ru-continuous:dom-A-square}
		\modulus{T(t)y - y} 
		= 
		\modulus{\int_0^t T(s) Ay \dxInt s} 
		\le 
		t \; \Res(0,A) \modulus{A^2 y}.
	\end{align}
	Now fix a vector $y \in \dom(A)$.
	Since $\dom(A)$ is norm-dense in $E$, there exists a sequence $(x_n)$ in $\dom(A)$ 
	that converges to $x := -Ay \in E$.
	After replacing $(x_n)$ with a subsequence we may even assume that $x_n \goesru x$. 
	Since the resolvent operator $\Res(0,A)$ is positive, it follows that
	$y_n := \Res(0,A)x_n \goesru \Res(0,A)x = y$ with respect to a regulator $0 \le u \in \dom(A)$.	
	For each index $n$ one has $y_n \in \dom(A^2)$, the series 
	\begin{align*}
		v := \sum_{n=1}^\infty \frac{\modulus{A^2 y_n}}{2^n(\norm{A^2 y_n} + 1)}
	\end{align*}
	is absolutely convergent in the Banach space $E$, and $v \ge 0$.
	Since every $y_n$ is in $\dom(A^2)$ 
	and $\Res(0,A) \modulus{A^2 y_n}$ is dominated by an ($n$-dependent) multiple of $\Res(0,A)v$,
	it follows from~\eqref{eq:lem:ru-continuous-on-domain:itm:ru-continuous:dom-A-square} that, 
	for each $n$, one has $T(t)y_n \goesru y_n$ as $t \downarrow 0$ 
	with respect to the regulator $\Res(0,A)v$.
	
	To finally show that $T(t)y \goesru y$, 
	let $\varepsilon > 0$. 
	We choose an index $n$ such that 
	$\modulus{y_n-y} \le \varepsilon u$.
	According to the previous paragraph there exists a time $t_0 > 0$ such that
	$\modulus{T(t) y_n - y_n} \le \varepsilon \Res(0,A)v$ for all $t \in (0,t_0]$. 
	For each such $t$ we thus have
	\begin{align*}
		\modulus{T(t) y - y}
		& \le 
		\modulus{T(t) y - T(t) y_n} + \modulus{T(t) y_n - y_n} + \modulus{y_n - y}
		\\
		& \le 
		\varepsilon T(t) u + \varepsilon \Res(0,A)v + \varepsilon u 
		\le 
		\varepsilon \big( \Res(0,A) \modulus{A u} + \Res(0,A)v + u \big),
	\end{align*}
	where we again used~\eqref{eq:lem:ru-continuous-on-domain:itm:order-bdd:integral} 
	for the last inequality.
	So we showed that $T(t)y \goesru y$ with respect to the regulator 
	$\Res(0,A) \modulus{A u} + \Res(0,A)v + u \in \dom(A)$.
\end{proof}

\begin{proof}[Proof of Theorem~\ref{thm:characterization}]
	\impliesProof{thm:characterization:itm:ruc}{thm:characterization:itm:oc}
	This implication is clear since relatively uniform convergence of nets 
	implies order convergence to the same limit.
	
	\impliesProof{thm:characterization:itm:oc}{thm:characterization:itm:ob-ind}
	This implication follows readily from the definition of order convergence.
	
	\impliesProof{thm:characterization:itm:ob-ind}{thm:characterization:itm:ruc}
	Let $x \in E$. 
	Since $\dom(A)$ is norm dense in $E$, there exists a sequence $(x_n)$ in $\dom(A)$ 
	that converges to $x$ in norm.
	By replacing $(x_n)$ with a subsequence that converges sufficiently fast, 
	we may assume that even $x_n \goesru x$. 
	For each fixed index $n$ it follows from 
	Lemma~\ref{lem:ru-continuous-on-domain}\ref{lem:ru-continuous-on-domain:itm:ru-continuous} 
	that $T(t)x_n \goesru x_n$ as $t \downarrow 0$. 
	Moreover, as $E$ is a Banach lattice, we can find a common regulator $u \in E_+$ 
	for those countably many ru-convergences
	(indeed, one can for instance take $u$ to be a weighted sum of the regulators $u_n$ 
	of the ru-convergences $T(t)x_n \goesru x_n$); 
	moreover, by making $u$ still larger we can also achieve 
	that it is a regulator of the ru-convergence $x_n \to x$.
	According to~\ref{thm:characterization:itm:ob-ind} there is a time $t_u > 0$ 
	and a vector $v \in E_+$ such that $T(t)u \le v$ for all $t \in [0, t_u]$. 
	
	Now let $\varepsilon > 0$. 
	Choose an integer $n \ge 1$ such that $\modulus{x_n-x} \le \varepsilon u$ 
	and then choose $t_0 > 0$ such that $\modulus{T(t)x_n - x_n} \le \varepsilon u$ for all $t \in (0,t_0]$. 
	For $t \in (0, t_u \land t_0]$ we thus have
	\begin{align*}
		\modulus{T(t) x - x}
		\le 
		\modulus{T(t)x - T(t)x_n} + \modulus{T(t)x_n - x_n} + \modulus{x_n - x}
		\le 
		\varepsilon(v + 2u),
	\end{align*}
	which proves $T(t)x \goesru x$ as $t \downarrow 0$.
	
	\impliesProof{thm:characterization:itm:ruc}{thm:characterization:itm:ob-unif}
	This was proved in \cite[Proposition~3.4]{KandicKaplin2020}.
	
	\impliesProof{thm:characterization:itm:ob-unif}{thm:characterization:itm:ob-ind} 
	This implication is obvious.
\end{proof}

Let us now discuss a few examples.
Examples~\ref{exa:translation} and~\ref{exa:heat-semigroup}\ref{exa:heat-semigroup:itm:reflexive} 
have already been discussed in \cite{KandicKaplin2020}; 
still we include them here since they are essential to provide some intuition about ruc semigroups
on Banach lattices.

Recall that a Banach lattice $E$ is called an \emph{AM-space} 
if $\norm{x \lor y} = \norm{x} \lor \norm{y}$ for all $x,y \in E_+$.
For instance, if $L$ is a locally compact Hausdorff space, 
then the space $C_0(L)$ of continuous real-valued functions on $L$ which vanish at infinity is, 
with respect to the pointwise order and the sup norm, an AM-space.
An important property of AM-spaces is that every relatively compact set in such a space 
is order bounded; 
this is essentially a consequence of the Arzelà--Ascoli theorem,
see for instance \cite[Proposition~II.7.6 on p.\,106]{Schaefer1974} for details.
This observation yields the following example class of ruc semigroups.

\begin{example}[Semigroups on AM-spaces]
	\label{exa:am-spaces}
	Let $E$ be an AM-space and let $T = (T(t))_{t \ge 0}$ be a positive $C_0$-semigroup on $E$. 
	Then $T$ is relatively uniformly continuous. 
	
	Indeed, for every $x \in E$ the local orbit $\{T(t)x : \, t \in [0,1]\}$ is compact and thus,
	as $E$ is an AM-space, order bounded. 
	Thus, the implication 
	\ref{thm:characterization:itm:ob-unif}$\Rightarrow$\ref{thm:characterization:itm:ruc} 
	in Theorem~\ref{thm:characterization} 
	shows that $T$ is a ruc-semigroup.
\end{example}

A very simple class of semigroups that are not ruc are \emph{translation} (or \emph{shift}) semigroups;
here is a concrete example:

\begin{example}[Translation semigroup]
	\label{exa:translation}
	Let $p \in [1,\infty)$.
	Then the \emph{left shift semigroup} $T = (T(t))_{t \ge 0}$ on $L^p(\bbR)$, 
	which is given by
	\begin{align*}
		T(t)f = f(\argument + t)
	\end{align*}
	for all $f \in L^p(\bbR)$ and all $t \ge 0$, 
	is not relatively uniformly continuous; 
	a detailed argument for this can, for instance, 
	be found in \cite[Example~3.12]{KandicKaplin2020}. 
	
	However, for every function $f$ in the Sobolev space $W^{1,p}(\bbR)$ 
	one has $T(t)f \goesru f$ as $t \downarrow 0$
	(even with respect to a regulator from $W^{1,p}(\bbR)$);
	this follows from Lemma~\ref{lem:ru-continuous-on-domain}\ref{lem:ru-continuous-on-domain:itm:ru-continuous} 
	since $W^{1,p}(\bbR)$ is the domain of the generator of $T$.
\end{example}

As opposed to the translation semigroup, the heat semigroup on $L^p(\bbR^d)$ is indeed ruc 
if $p \in (1,\infty)$. 
We discuss this in more detail in the following example.

\begin{example}[The heat semigroup for $1 \le p < \infty$]
	\label{exa:heat-semigroup}
	Let $p \in [1,\infty)$ and let $T = (T(t))_{t \ge 0}$ denote the heat semigroup on $L^p(\bbR^d)$ 
	which is generated by the Laplace operator;
	more concretely, one has $T(t)f = k_t \star f$ 
	for each $f \in L^p(\bbR^d)$ and each $t > 0$, where $\star$ denotes the convolution 
	and the \emph{heat kernel} $k_t$ is given by
	\begin{align*}
		k_t(x) = \frac{1}{(4\pi t)^{d/2}} \exp\Big(-\frac{\norm{x}^2}{4t}\Big)
	\end{align*}
	for all $x \in \bbR^d$ and every $t > 0$.
	Whether the heat semigroup $T$ is ruc depends on $p$: 
	\begin{enumerate}[label=(\alph*)]
		\item\label{exa:heat-semigroup:itm:reflexive} 
		For $p \in (1,\infty)$ the heat semigroup $T$ is ruc. 
		Indeed, it was shown in a classical paper by Stein \cite[Corollary~2]{Stein1961}
		that every orbit of $T$ is order bounded in $L^p(\bbR^d)$ (even over the entire time axis $[0,\infty)$);
		so the ruc property of $T$ follows from Theorem~\ref{thm:characterization}.
		
		\item\label{exa:heat-semigroup:itm:p=1} 
		For $p=1$ the heat semigroup is not ruc. 
		We prove this in Proposition~\ref{prop:heat-semigroup-on-L_1} 
		in Section~\ref{section:uniform-bdd}, 
		after we have shown a uniform order boundedness result in Theorem~\ref{thm:unif-order-bdd}.
	\end{enumerate}
\end{example}

A more general class of semigroups that are ruc can be derived from the heat semigroup in the following way:

\begin{example}[Semigroups with a Gaussian estimate]
	\label{exa:gaussian-estimates}
	Let $p \in (1,\infty)$ and let $T$ denote the heat semigroup on $L^p(\bbR^d)$, 
	as in Example~\ref{exa:heat-semigroup}\ref{exa:heat-semigroup:itm:reflexive}.
	
	Let $\Omega \subseteq \bbR^d$ be a non-empty open set and let $S = (S(t))_{t \in [0,\infty)}$
	be a positive $C_0$-semigroup on $L^p(\Omega)$. 
	We interpret $L^p(\Omega)$ as a subspace of $L^p(\bbR^d)$ 
	by extending all functions in $L^p(\Omega)$ with the value $0$ outside of $\Omega$.
	The semigroup $S$ is said to \emph{satisfy a Gaussian estimate} if there exist numbers $b > 0$ and $c \ge 1$ 
	such that 
	\begin{align*}
		S(t)f \le c T(bt) f 
		\qquad \text{for all } t \in [0,1] \text{ and all } f \in L^p(\Omega).
	\end{align*}
	For more information on Gaussian estimates we refer the reader
	for instance to \cite[Section~7.4]{Arendt2004}.
	Under very mild regularity conditions semigroups generated by second order differential operators on $\Omega$ 
	with classical boundary conditions 
	(such as, for instance, Dirichlet or Neumann boundary conditions)
	satisfy Gaussian estimates, see for instance \cite[Theorem~6.10 on p.\,170]{Ouhabaz2005}.
	
	Now assume that $S$ satisfies a Gaussian estimate.
	Since all orbits of the heat semigroup on $L^p(\bbR^d)$ are order bounded 
	(as discussed in Example~\ref{exa:heat-semigroup}\ref{exa:heat-semigroup:itm:reflexive})
	it follows that all orbits of $S$ are order bounded in $L^p(\Omega)$ over the time interval $[0,1]$. 
	Hence, $S$ is relatively uniformly continuous according 
	to Theorem~\ref{thm:characterization}.
\end{example}

\section{Growth and spectral bound of ruc semigroups}
\label{section:growth}

Let $T = (T(t))_{t \ge 0}$ be a $C_0$-semigroup on a Banach lattice $E$, 
with generator $A$. 
Recall that the \emph{spectral bound} of $A$ is defined as 
\begin{align*}
	\spb(A) := \sup \{\re \lambda : \, \lambda \in \spec(A) \} \in [-\infty,\infty),
\end{align*}
where $\spec(A)$ denotes the spectrum of $A$;
more precisely speaking, it actually denotes the spectrum of the complex extension of $A$ 
to the complexification of the Banach lattice $E$
-- throughout this section we will tacitly work on the complexification of the given Banach lattice 
whenever we use spectral theory.
Let $\gbd(T)$ denote the growth bound of the semigroup $T$, i.e., 
\begin{align*}
	\gbd(T) 
	:= 
	\inf 
	\big\{
		\omega \in \bbR 
		: \, 
		\exists M \ge 1 \text{ such that } \norm{T(t)} \le M e^{\omega t} \text{ for all } t \ge 0 
	\big\}
	,
\end{align*}
which is also in $[-\infty,\infty)$. 
One always has $\spb(A) \le \gbd(T)$, but equality does not hold, in general 
-- even if the semigroup is positive. 
See for instance \cite[Theorem~5.1.11]{ArendtBattyHieberNeubrander2011} for a counterexample.

However, on some classes of Banach lattices, positivity of a $C_0$-semigroup 
does indeed imply $\spb(A) = \gbd(T)$: 
this is the case on all $L^p$-spaces, 
where the result is due to Weis \cite{Weis1995, Weis1998} 
and where a very short proof was recently given by Vogt \cite{Vogt2022};
it is also true on AM-spaces, 
where the result was originally shown by Batty and Davies \cite[Theorem~4]{BattyDavies1983}
(in fact, they even showed it on a more general class of ordered Banach spaces which, 
when restricted to the Banach lattice case, gives precisely the class of AM-spaces)
and where a new and very short proof was recently given 
by Arora and the first-named author \cite[Theorem~1]{AroraGlueck2022}.

The main ingredient in the proof of \cite[Theorem~1]{AroraGlueck2022} was local order boundedness 
of the orbits of the semigroup, which is automatic on AM-spaces 
(see Example~\ref{exa:am-spaces}). 
Hence, it is not particularly surprising that the same result remains true for ruc semigroups 
on general Banach lattices. 
The proof is essentially the same as for \cite[Theorem~1]{AroraGlueck2022}, 
but we include the details for the convenience of the reader 
(and as the argument is very short anyway): 

\begin{theorem}[Spectral bound equals growth bound]
	Let $T = (T(t))_{t \ge 0}$ be a positive $C_0$-semigroup on a Banach lattice $E$ 
	and assume that $T$ is relatively uniformly continuous. 
	Then $\spb(A) = \gbd(T)$.
\end{theorem}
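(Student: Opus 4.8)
The plan is to prove the equality $\spb(A) = \gbd(T)$ by showing the non-trivial inequality $\gbd(T) \le \spb(A)$; the reverse inequality $\spb(A) \le \gbd(T)$ always holds. Since shifting the semigroup by a scalar $\lambda$ (i.e.\ replacing $T(t)$ by $e^{-\lambda t} T(t)$) shifts both $\spb(A)$ and $\gbd(T)$ by $-\lambda$ and preserves positivity, it suffices to show that if $\spb(A) < 0$ then $\gbd(T) < 0$. So I would assume $\spb(A) < 0$ and aim to derive uniform exponential decay of the semigroup.

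Under the assumption $\spb(A) < 0$, the resolvent $\Res(0,A) = (-A)^{-1}$ exists as a bounded positive operator. The key point is that the ruc hypothesis lets us invoke Lemma~\ref{lem:ru-continuous-on-domain}, whose part~\ref{lem:ru-continuous-on-domain:itm:order-bdd} gives, for each $y \in \dom(A)$, a positive dominating vector $z \in \dom(A)$ with $\modulus{T(t)y} \le e^{\omega t} z$ for all $t \ge 0$. The strategy is to combine such order domination with the Laplace-transform characterisation of the resolvent to control the abstract Cauchy problem. Concretely, for $y \in \dom(A)$ one has $\modulus{T(t)y} \le z$ for all $t \ge 0$ (taking $\omega = 0$ as in the lemma, after the reduction), which already yields pointwise boundedness of each orbit; the task is to upgrade this to \emph{uniform exponential decay of the operator norm}.

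The mechanism I expect to use is that order boundedness of all orbits, together with positivity, forces uniform boundedness of the orbits via a closed-graph or uniform-boundedness type argument, after which a standard semigroup estimate converts boundedness of orbits into $\gbd(T) \le 0$. More precisely, I would first establish that the family $\{T(t) : t \ge 0\}$ is uniformly bounded in operator norm: for each $x \in E$ the orbit $\{T(t)x : t \ge 0\}$ is order bounded (hence norm bounded) on a dense set $\dom(A)$ by the lemma, and one propagates this to all of $E$. Once $\sup_{t \ge 0} \norm{T(t)} =: M < \infty$, the semigroup law gives $\norm{T(nt)} \le \norm{T(t)}^n$, and a subadditivity/spectral-radius argument (or directly examining $\spr(T(t)) = e^{t\,\spb(A)}$ via the spectral mapping theorem for the resolvent-accessible part of the spectrum) yields $\gbd(T) = \spb(A)$.

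The main obstacle will be the passage from \emph{order boundedness of individual orbits} to \emph{uniform norm boundedness of the semigroup}, since order bounds depend on $x$ and carry no a priori uniform norm control. Here I anticipate leaning on the argument of \cite[Theorem~1]{AroraGlueck2022}: the crucial step is that positivity plus the resolvent representation $\Res(0,A) = \int_0^\infty T(s)\,\dxInt s$ lets one bound $\norm{\int_0^\infty T(s)x \dxInt s}$ and then, using the order domination $\modulus{T(t)y} \le z$ with $z = \Res(0,A)\modulus{Ay}$ from \eqref{eq:lem:ru-continuous-on-domain:itm:order-bdd:integral}, deduce that the orbits do not merely stay bounded but decay. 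The delicate point is ensuring the estimate is genuinely uniform in $t$ and independent of the chosen $y$; I would handle this by expressing $\norm{T(t)}$ through the action on a dense set and invoking the positivity-based domination to absorb the $t$-dependence into the fixed dominating vector $z$, concluding $\gbd(T) < 0$ and hence, by the reduction, $\gbd(T) = \spb(A)$ in general.
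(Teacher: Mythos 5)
There is a genuine gap, and it sits exactly at the step you yourself flag as the ``main obstacle'': the passage from boundedness of orbits of vectors in $\dom(A)$ to uniform norm boundedness of the semigroup. Note first that your only stated use of the ruc hypothesis is to ``invoke Lemma~\ref{lem:ru-continuous-on-domain}'' --- but that lemma holds for \emph{every} positive $C_0$-semigroup; relative uniform continuity is not among its hypotheses. Consequently, if your outline were correct, it would prove $\spb(A)=\gbd(T)$ for \emph{all} positive $C_0$-semigroups, which is false: the paper itself recalls the counterexample in \cite[Theorem~5.1.11]{ArendtBattyHieberNeubrander2011}. The unrepairable step is the ``propagation'' from the dense subspace $\dom(A)$ to $E$: boundedness of all orbits starting in $\dom(A)$ only yields, via the uniform boundedness principle on $\dom(A)$ equipped with the graph norm, an estimate of the form $\norm{T(t)y}\le C\,(\norm{y}+\norm{Ay})$, and in the counterexamples just mentioned (rescaled so that $\spb(A)<0<\gbd(T)$) every orbit starting in $\dom(A)$ stays bounded --- indeed tends to zero --- while $\norm{T(t)}$ grows exponentially. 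Your fallback remark that $\spr(T(t))=e^{t\spb(A)}$ ``via the spectral mapping theorem'' is also unavailable: the spectral mapping theorem fails for $C_0$-semigroups (one has $\spr(T(t))=e^{t\gbd(T)}$, which is of no help here); this failure is precisely why $\spb(A)<\gbd(T)$ can occur at all.

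What the paper does instead, and what your outline is missing, is to use the ruc hypothesis where it is actually needed: by Theorem~\ref{thm:characterization}, implication \impliesProof{thm:characterization:itm:ruc}{thm:characterization:itm:ob-unif}, \emph{every} $x\in E_+$ --- not just $x$ in a dense set --- admits a vector $z\in E_+$ with $0\le T(s)x\le z$ for all $s\in[0,1]$. Assuming for a contradiction (after rescaling) that $\spb(A)<0<\gbd(T)$, one then writes, for $t\ge 1$,
\begin{align*}
	0 \le T(t)x = \int_0^1 T(t-s)\,T(s)x \dxInt s \le \int_{t-1}^{t} T(s)z \dxInt s \le \int_0^\infty T(s)z \dxInt s = \Res(0,A)z,
\end{align*}
where the convergence of the improper integral uses positivity together with $\spb(A)<0$ \cite[Theorem~5.2.1]{ArendtBattyHieberNeubrander2011}. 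This bounds every orbit, for every $x\in E$, inside a fixed order interval and hence in norm; only now does the uniform boundedness principle apply on all of $E$, making $T$ norm bounded and contradicting $\gbd(T)>0$. Your rescaling reduction and your appeal to the Laplace representation of $\Res(0,A)$ are sound; the missing idea is to apply the order-boundedness consequence of the ruc property to arbitrary $x\in E$ rather than to $y\in\dom(A)$, since domination on the dense subspace alone can never control the operator norm.
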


\begin{proof}
	Assume for a contradiction that $\spb(A) < \gbd(T)$.
	By rescaling the semigroup we can thus assume that $\spb(A) < 0 < \gbd(T)$.
	We now show that the orbit of every $x \in E_+$ (and hence of every $x \in E$) is norm bounded;
	due to the uniform boundedness principle this proves that $T$ is norm bounded, 
	which is a contradiction to $0 < \gbd(T)$.
	
	So fix $x \in E_+$. 
	As $T$ is an ruc semigroup, there exists, 
	according to Theorem~\ref{thm:characterization}, a vector $z \in E_+$ 
	such that $0 \le T(t)x \le z$ for all $t \in [0,1]$.
	Hence we obtain for each $t \ge 1$ 
	\begin{align*}
		0 
		& \le 
		T(t)x 
		= 
		\int_0^1 T(t) x \dxInt s 
		= 
		\int_0^1 T(t-s) T(s) x \dxInt s 
		\\
		& \le 
		\int_{t-1}^t T(s) z \dxInt s 
		\le 
		\int_0^\infty T(s) z \dxInt s
		= 
		\Res(0,A) z;
	\end{align*}
	here we used that, since $\spb(A) < 0$ and the semigroup $T$ is positive, 
	the integral $\int_0^\infty T(s) z \dxInt s$ converges as an improper Riemann integral to $\Res(0,A)z$, 
	see for instance \cite[Theorem~12.7]{BatkaiKramarRhandi2017} or \cite[Theorem~5.2.1]{ArendtBattyHieberNeubrander2011}.
	Hence, the orbit of $x$ under $T$ is indeed norm bounded, as claimed.
\end{proof}

An argument close in spirit to the above one 
was recently also used in the proof of \cite[Theorem~4.6]{ArnoldCoine2023} 
to derive a growth estimate for Kreiss bounded positive $C_0$-semigroups on AM-spaces; 
however, this proof includes various further techniques, 
and it does not seem clear under which conditions the proof can be transferred to ruc semigroups on general Banach lattices.

\section{Uniform order boundedness}
\label{section:uniform-bdd}

Assertion~\ref{thm:characterization:itm:ob-unif} in Theorem~\ref{thm:characterization} 
states that, for every fixed $t_0 > 0$, 
the set $\{T(t)x : \, t \in [0,t_0] \}$ is order bounded in $E$ for every $x \in E$. 
It is natural to ask whether this set always has an order bound 
whose norm can be controlled by $\norm{x}$.
In the following theorem we show that this is indeed the case. 
The argument is not specifically related to semigroups, though, 
so we rather consider general operator families:

\begin{theorem}[Principle of uniform order boundedness]
	\label{thm:unif-order-bdd}
	Let $E$ be real Banach space and $F$ a Banach lattice. 
	Let $I$ be a non-empty set and let $(T_i)_{i \in I}$ be a family of bounded linear operators from $E$ to $F$. 
	Assume that for each $x \in E$ the orbit $\left\{T_i x: \, i \in I\right\}$ is order bounded in $F$.
	
	Then there exists a real number $M \ge 0$ with the following property: 
	for each $x \in E$ there exists a vector $y \in F_+$ of norm $\norm{y} \le M \norm{x}$ 
	such that the orbit $\{T_i x: \, i \in I\}$ is contained in the order interval $[-y,y]$.
\end{theorem}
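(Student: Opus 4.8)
The plan is to recognize this as a closed-graph / uniform-boundedness type statement and to reduce it to a single linear operator whose boundedness encodes exactly the desired norm estimate. The key construction is to form the map $S \colon E \to F^{I}/(\text{suitable quotient})$, or more concretely a map into a Banach lattice of bounded $F$-valued functions, so that order boundedness of each orbit becomes membership in the range and the norm $M$ becomes the operator norm of $S$. The hypothesis that each orbit $\{T_i x : i \in I\}$ is order bounded says precisely that for each $x$ there is some $y \in F_+$ with $|T_i x| \le y$ for all $i$; the theorem asks us to produce such $y$ with $\norm{y} \le M\norm{x}$ for a uniform $M$.

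First I would set up the target space. Let $\ell^\infty(I,F)$ denote the Banach lattice of bounded functions $I \to F$ with the supremum norm and pointwise order. The assignment $x \mapsto (T_i x)_{i \in I}$ need not land in $\ell^\infty(I,F)$ a priori (the orbit is order bounded, not necessarily norm bounded), so I would instead work in an ideal-type object. The cleanest route is to consider, for each $x$, the set $U_x := \{ y \in F_+ : |T_i x| \le y \ \forall i \in I\}$, which is nonempty by hypothesis, upward directed, and closed (each condition $|T_i x| \le y$ is closed since lattice operations are continuous). The goal is a uniform bound on $\inf\{\norm{y} : y \in U_x\}$ in terms of $\norm{x}$. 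Define
\begin{align*}
	q(x) := \inf\{ \norm{y} : y \in F_+,\ |T_i x| \le y \text{ for all } i \in I \}.
\end{align*}
One checks that $q$ is a seminorm on $E$ (subadditivity follows since $U_x + U_{x'} \subseteq U_{x+x'}$ using $|T_i(x+x')| \le |T_i x| + |T_i x'|$, and homogeneity is clear), and that $q(x) < \infty$ for every $x$ by the order-boundedness hypothesis.

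The heart of the argument is then to show $q$ is continuous, equivalently bounded, on $E$; the uniform constant $M$ is exactly a bound for $q$. Here I would invoke the closed graph theorem or a Baire category argument. The natural approach is to observe that $q$ is lower semicontinuous: if $x_n \to x$ in $E$ and $q(x_n) \le c$, one can extract, for each $n$, near-optimal bounds $y_n$ with $\norm{y_n}$ close to $q(x_n)$, but controlling the limit requires care since $F$ need not have order continuous norm and order intervals need not be weakly compact. I expect this to be the main obstacle: passing to the limit to establish lower semicontinuity, or alternatively completeness of a graph, because there is no automatic compactness of the sets $U_x$. The cleanest fix is to realize $q$ as a supremum of continuous seminorms. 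Indeed, since $F$ is a Banach lattice, for $y \in F_+$ one has $\norm{y} = \sup\{ \langle y, \varphi\rangle : \varphi \in F'_+,\ \norm{\varphi} \le 1\}$, and for fixed positive $\varphi$ the quantity $\sup_{i} \langle |T_i x|, \varphi\rangle$ is a candidate that is a supremum of the continuous seminorms $x \mapsto \langle |T_i x|, \varphi\rangle$, hence lower semicontinuous. Since each such seminorm is finite for every $x$ (orbits are order bounded, so dominated), the Banach--Steinhaus / Baire category theorem applied to the lower semicontinuous seminorm $q$ on the complete space $E$ yields a uniform bound $q(x) \le M\norm{x}$. Finally, to upgrade from the infimum being bounded to the existence of an actual $y$ attaining $\norm{y} \le M\norm{x}$, I would use that the defining set $\{y \in F_+ : |T_i x| \le y\ \forall i\}$ is closed and the norm is lower semicontinuous, together with a completeness argument (a Cauchy-type selection of near-optimal $y$'s, using the lattice structure to take infima of a minimizing sequence) to produce a genuine minimizer or at least an element of norm $\le M\norm{x}$; allowing an arbitrarily small slack and then enlarging $M$ slightly sidesteps any existence subtlety if attainment fails.
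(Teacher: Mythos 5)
Your reduction to the seminorm $q(x) = \inf\{\norm{y} : y \in F_+,\ \modulus{T_i x} \le y \text{ for all } i \in I\}$ is sound, and the attainment issue at the end is indeed harmless (enlarge $M$ slightly). But the step on which everything hinges --- lower semicontinuity of $q$ --- is not established, because the functional you exhibit as a supremum of continuous seminorms is not $q$. Writing
\begin{align*}
	r(x) := \sup_{\varphi \in F'_+,\, \norm{\varphi}\le 1}\ \sup_{i \in I}\ \langle \modulus{T_i x}, \varphi\rangle ,
\end{align*}
the two suprema commute, and since $\sup\{\langle z,\varphi\rangle : \varphi \in F'_+,\ \norm{\varphi}\le 1\} = \norm{z}$ for every $z \in F_+$, one simply gets $r(x) = \sup_{i \in I}\norm{T_i x}$: the $\ell^\infty$-norm of the orbit, not the order-bound norm. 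Only $r \le q$ holds, and the gap is essential: bounding $r$ is exactly the classical Banach--Steinhaus theorem and produces no order bound whatsoever. That $q$ cannot be recovered from linear functionals applied to the individual $\modulus{T_i x}$ is witnessed by the heat semigroup on $L^1(\bbR^d)$ over $t \in [0,1]$ (Proposition~\ref{prop:heat-semigroup-on-L_1} combined with Theorem~\ref{thm:characterization}): there every orbit is norm bounded, so $r$ is finite and even uniformly bounded, yet some orbits fail to be order bounded, i.e.\ $q(x)=\infty$ for some $x$. So your Baire category step applies to the wrong seminorm; it re-proves the uniform boundedness principle but not the theorem. This is precisely the obstacle you yourself flagged (no order continuity of the norm, no weak compactness of order intervals), and the proposed fix does not overcome it.

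The seminorm framework can be completed, but by a different mechanism: $q$ is \emph{countably} subadditive. If $x = \sum_n x_n$ converges in norm and $\sum_n q(x_n) < \infty$, choose $y_n \in F_+$ with $\modulus{T_i x_n} \le y_n$ for all $i$ and $\norm{y_n} \le q(x_n) + \varepsilon 2^{-n}$; then $y := \sum_n y_n$ converges absolutely in $F$, and passing to the limit in $\modulus{\sum_{n \le N} T_i x_n} \le y$ (the positive cone is closed) gives $\modulus{T_i x} \le y$ for every $i$, whence $q(x) \le \sum_n q(x_n) + \varepsilon$. A finite-valued, countably subadditive seminorm on a Banach space is automatically continuous (Zabreiko's lemma, itself a Baire category argument), which yields the desired $M$. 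This absolutely-convergent-series argument is exactly what powers the paper's proof: there one forms the space $\ell^\ob(I;F)$ of $F$-valued functions with order bounded range, normed by $\norm{u}_\ob = \inf\{\norm{y} : y \in F_+,\ u(I) \subseteq [-y,y]\}$, proves its completeness by the same series trick, and then applies the closed graph theorem to $x \mapsto (T_i x)_{i \in I}$, the graph being closed because $\norm{\argument}_\ob$ dominates the sup norm while each $T_i$ is continuous.
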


It seems natural to call this a \emph{uniform order boundedness principle}
(where the word ``uniform'' refers to the norm of the order bounds 
rather than to the order bounds themselves).
For the proof we need the following observation:

\begin{proposition}
	\label{prop:order-bdd-banach-lattice}
	Let $F$ be a Banach lattice and let $I$ be a non-empty set. Then the vector space
	\begin{align*}
		\ell^\ob(I; F) := \left\{u: I \to F: \;  \text{the range $u(I)$ of $u$ is order bounded} \right\}
	\end{align*}
	is a Banach lattice with respect to pointwise ordering and the norm $\norm{\argument}_\ob$ that is given by
	\begin{align*}
		\norm{u}_\ob 
		= 
		\inf
		\big\{
			\norm{y} 
			: \; 
			y \in F_+ \text{ and } u(I) \subseteq [-y,y] 
		\big\}
	\end{align*}
	for each $u \in \ell^\ob(I;F)$. Moreover, we have $\norm{u}_\infty \le \norm{u}_\ob$ for each $u \in \ell^\ob(I;F)$.
\end{proposition}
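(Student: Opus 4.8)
The plan is to recognise $\ell^\ob(I;F)$ as a vector sublattice of the space of all functions $I \to F$ equipped with the pointwise order and pointwise lattice operations, and then to verify in turn that $\norm{\argument}_\ob$ is a well-defined lattice norm and that the resulting normed lattice is complete. The only genuinely delicate point is completeness; everything preceding it reduces to elementary manipulations of order bounds, using repeatedly that $u(I) \subseteq [-y,y]$ is equivalent to $\modulus{u(i)} \le y$ for all $i$, i.e.\ to $\modulus{u}(I) \subseteq [0,y]$, together with the fact that the norm of $F$ is monotone on $F_+$. Concretely, if $u(I) \subseteq [-y_1,y_1]$ and $v(I) \subseteq [-y_2,y_2]$ with $y_1,y_2 \in F_+$, then $(u+v)(I) \subseteq [-(y_1+y_2),\,y_1+y_2]$ and $(\lambda u)(I) \subseteq [-\modulus{\lambda}y_1,\,\modulus{\lambda}y_1]$, and since $\modulus{u}(i) = \modulus{u(i)} \le y_1 \vee y_2$ one sees at once that $u \vee v$ again has order bounded range. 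Thus $\ell^\ob(I;F)$ is a vector sublattice, and $\norm{u}_\ob = \norm{\modulus{u}}_\ob$.

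For the norm axioms I would argue directly from the definition. The infimum is taken over a non-empty set (precisely because $u(I)$ is order bounded), so $\norm{u}_\ob$ is a finite non-negative number. Homogeneity holds because, for $\lambda \neq 0$, the condition $(\lambda u)(I) \subseteq [-y,y]$ is equivalent to $u(I) \subseteq [-y/\modulus{\lambda},\,y/\modulus{\lambda}]$, so the admissible bounds scale by $\modulus{\lambda}$; the triangle inequality follows by adding admissible bounds $y_1,y_2$ for $u$ and $v$ and using $\norm{y_1+y_2} \le \norm{y_1}+\norm{y_2}$. For positive definiteness, and simultaneously for the asserted estimate $\norm{u}_\infty \le \norm{u}_\ob$, I would use that the norm of $F$ is a lattice norm: whenever $u(I) \subseteq [-y,y]$, the inequality $\modulus{u(i)} \le y$ yields $\norm{u(i)} \le \norm{y}$ for every $i$, so $\norm{u}_\infty \le \norm{y}$; taking the infimum over all admissible $y$ gives $\norm{u}_\infty \le \norm{u}_\ob$, and in particular $\norm{u}_\ob = 0$ forces $u = 0$. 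Finally, the lattice norm property is immediate from the characterisation of admissible bounds: if $\modulus{u} \le \modulus{v}$ pointwise, then every $y$ admissible for $v$ is admissible for $u$, whence $\norm{u}_\ob \le \norm{v}_\ob$.

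The main obstacle is completeness, where I would use the standard fast-subsequence argument. Given a Cauchy sequence $(u_n)$ in $\ell^\ob(I;F)$, the estimate $\norm{\argument}_\infty \le \norm{\argument}_\ob$ shows that $(u_n(i))_n$ is Cauchy in $F$ for each $i$, so $u(i) := \lim_n u_n(i)$ exists by completeness of $F$. Passing to a subsequence (which I relabel $(u_n)$, and whose pointwise limit is still $u$) I may assume $\norm{u_{n+1}-u_n}_\ob < 2^{-n}$, and I pick $w_n \in F_+$ with $\norm{w_n} < 2^{-n}$ and $\modulus{u_{n+1}(i)-u_n(i)} \le w_n$ for all $i$. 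Since $\sum_n \norm{w_n} < \infty$ and $F$ is norm-complete, the tails $r_k := \sum_{n \ge k} w_n$ converge in $F$; as the positive cone $F_+$ is norm-closed, $r_k \in F_+$ dominates every partial sum $\sum_{n=k}^{l-1} w_n$, and $\norm{r_k} \le \sum_{n \ge k}\norm{w_n} \to 0$. For fixed $i$ and $l > k$ a telescoping estimate gives $\modulus{u_l(i)-u_k(i)} \le \sum_{n=k}^{l-1} w_n \le r_k$; letting $l \to \infty$ and invoking norm-continuity of the lattice operations together with closedness of $F_+$ yields $\modulus{u(i)-u_k(i)} \le r_k$ for every $i$. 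This does two things at once: choosing $y \in F_+$ with $u_1(I) \subseteq [-y,y]$ gives $\modulus{u(i)} \le y + r_1$ for all $i$, so $u \in \ell^\ob(I;F)$; and it shows $(u-u_k)(I) \subseteq [-r_k,r_k]$, whence $\norm{u-u_k}_\ob \le \norm{r_k} \to 0$. Since a Cauchy sequence with a convergent subsequence converges, we conclude $u_n \to u$ in $\norm{\argument}_\ob$, so $\ell^\ob(I;F)$ is complete and therefore a Banach lattice.
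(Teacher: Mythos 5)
Your proof is correct and takes essentially the same approach as the paper: the key completeness step in both is to pick positive order bounds with geometrically summable norms and sum them in $F$ to dominate the tail, the paper phrasing this via the absolutely-convergent-series criterion and you via a fast Cauchy subsequence, which are equivalent formulations of the same device. Your detailed verification of the vector lattice structure and the lattice-norm axioms (which the paper leaves as ``easy to check'') is also sound.
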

\begin{proof}
	It is easy to check that $\ell^\ob(I;F)$ is a vector lattice with respect to pointwise ordering 
	and that $\norm{\argument}_\ob$ is a norm that renders $\ell^\ob(I;F)$ a normed vector lattice. 
	Clearly we have $\norm{u}_\infty \le \norm{u}_\ob$ for all $u \in \ell^\ob(I;F)$. 
	So we only need to show completeness of the norm $\norm{\argument}_\ob$ on $\ell^\ob(I;F)$.
	
	To this end, it suffices to show that every absolutely convergent series is convergent. 
	So let $(u_n)$ be a sequence in $\ell^\ob(I;F)$ such that
	\begin{align*}
		\sum_{n=1}^\infty \norm{u_n}_\ob < \infty.
	\end{align*}
	Then we have, in particular, $\sum_{n=1}^\infty \norm{u_n}_\infty < \infty$, so for each $i \in I$ the series
	\begin{align*}
		\sum_{n=1}^\infty u_n(i)
	\end{align*}
	converges to a vector $u(i)$ in $F$. 
	We are going to show that the vector $u: I \ni i \mapsto u(i) \in F$ is an element of $\ell^\ob(I;F)$ 
	and that it is, moreover, the limit of $\sum_{n=1}^N u_n$ with respect to $\norm{\argument}_\ob$ as $N \to \infty$.
	
	For each index $n \in \bbN$ we can find a vector $y_n \in F_+$ 
	such that $\norm{y_n} \le \norm{u_n}_\ob + \frac{1}{2^n}$ and $u_n(I) \subseteq [-y_n, y_n]$. 
	Hence, the vector
	\begin{align*}
		y := \sum_{n=1}^\infty y_n
	\end{align*}
	is a well-defined element of $F_+$, 
	and the order interval $[-y,y]$ contains $u(i)$ for each $i \in I$. 
	This proves that $u $ is an element of $\ell^\ob(I;F)$.
	
	Moreover, for each $N \in \bbN$ and each $i \in I$ we have
	\begin{align*}
		\modulus{u(i) - \sum_{n=1}^N u_n(i)} \le \sum_{n=N+1}^\infty y_n := z_N \in F.
	\end{align*}
	Thus, the $\norm{\argument}_\ob$-norm of $u - \sum_{n=1}^N u_n$ 
	is dominated by $\norm{z_N}$ for each $N \in \bbN$, 
	and the latter norm tends to $0$ as $N \to \infty$. 
	This proves that $u$ is indeed the limit of the series over $(u_n)$ 
	with respect to the $\norm{\argument}_\ob$-norm.
\end{proof}

The proof of Theorem~\ref{thm:unif-order-bdd} is now an easy consequence of the closed graph theorem:

\begin{proof}[Proof of Theorem~\ref{thm:unif-order-bdd}]
	Let $\left(\ell^\ob(I;F), \norm{\argument}_\ob\right)$ be the Banach lattice 
	introduced in Proposition~\ref{prop:order-bdd-banach-lattice}. 
	Consider the linear operator $T: E \to \left(\ell^\ob(I;F), \norm{\argument}_\ob\right)$ 
	which is given by
	\begin{align*}
		Tx := (T_ix)_{i \in I}
	\end{align*}
	for each $x \in E$; this operator is well defined 
	since $(T_ix)_{i \in I}$ is an element of $\ell^\ob(I;F)$ for each $x \in E$ by assumption.
	
	It follows from the continuity of the single operators $T_i$ 
	and from the fact that the norm $\norm{\argument}_\ob$ on $\ell^\ob(I;F)$ is stronger than the sup norm, 
	that the operator $T$ has closed graph; 
	therefore, $T$ is continuous by the closed graph theorem. 
	This immediately implies the assertion of the theorem 
	(where $M$ can be chosen as any number that is strictly larger than the operator norm of $T$).
\end{proof}

Note that one can use a similar argument to prove the classical uniform boundedness theorem 
for linear operators between Banach spaces; 
one just has to replace the space $\ell^\ob(I; F)$ in the proof with the space
$\ell^\infty(I; F)$ of all bounded functions from $I$ to $F$, endowed with the sup norm.

It is a well-known phenomenon in harmonic analysis that the maximal operator associated 
to the heat semigroup is not bounded in $L^1(\bbR)$, 
see for instance the discussion in \cite[introduction on p.\,417]{NowakSjoegren2007}.
In fact, this is even true if one only considers the time interval $[0,1]$.
By means of Theorem~\ref{thm:unif-order-bdd} this implies that the heat semigroup on $L^1(\bbR^d)$ 
is not relatively uniformly continuous.
For the convenience of the reader we give all the details of the argument in the proof of the following proposition
(to which we have already referred in Example~\ref{exa:heat-semigroup}\ref{exa:heat-semigroup:itm:p=1}).

\begin{proposition}[The heat semigroup is not ruc on $L^1$]
	\label{prop:heat-semigroup-on-L_1}
	The heat semigroup $(T(t))_{t \ge 0}$ from Example~\ref{exa:heat-semigroup} is not relatively uniformly continuous 
	on the space $L^1(\bbR^d)$.
\end{proposition}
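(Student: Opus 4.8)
The plan is to argue by contradiction, using the uniform order boundedness principle (Theorem~\ref{thm:unif-order-bdd}) to convert the ruc property into an $L^1$-bound for the maximal operator of the heat semigroup, and then to invoke the well-known unboundedness of this maximal operator on $L^1$.

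First I would assume that the heat semigroup $T$ is relatively uniformly continuous on $L^1(\bbR^d)$. By the equivalence of~\ref{thm:characterization:itm:ruc} and~\ref{thm:characterization:itm:ob-unif} in Theorem~\ref{thm:characterization}, applied with $t_0 = 1$, this means that for every $f \in L^1(\bbR^d)$ the local orbit $\{T(t)f : t \in [0,1]\}$ is order bounded. Hence the family $(T(t))_{t \in [0,1]}$ of bounded linear operators satisfies the hypothesis of Theorem~\ref{thm:unif-order-bdd}, with the role of both $E$ and $F$ played by $L^1(\bbR^d)$ and with index set $I = [0,1]$; we thus obtain a constant $M \ge 0$ such that for each $f$ there is a vector $y \in L^1(\bbR^d)_+$ with $\norm{y} \le M \norm{f}$ and $\modulus{T(t)f} \le y$ for all $t \in [0,1]$.

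Second, I would turn this order bound into a pointwise bound for the maximal function $\calM f := \sup_{t \in (0,1]} \modulus{T(t)f}$. For each fixed $t$ the inequality $\modulus{T(t)f} \le y$ holds pointwise almost everywhere, but the exceptional null set depends on $t$; to pass to the supremum over the uncountable index set $(0,1]$ I would use that, for $t > 0$, a representative of $T(t)f = k_t \star f$ is given by the jointly continuous function $(t,x) \mapsto (k_t \star f)(x)$, continuity following by dominated convergence from the smoothness and the uniform boundedness of the heat kernel on compact time intervals bounded away from $0$. Fixing a countable dense subset $Q \subseteq (0,1]$, the bound $\modulus{T(t)f} \le y$ for $t \in Q$ then holds off a single null set, and continuity in $t$ upgrades this to $\calM f \le y$ almost everywhere. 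Consequently $\norm{\calM f} \le \norm{y} \le M \norm{f}$ for all $f \in L^1(\bbR^d)$, i.e.\ the maximal operator $\calM$ is bounded on $L^1(\bbR^d)$.

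Finally, this contradicts the classical fact from harmonic analysis that the maximal operator of the heat semigroup is unbounded on $L^1$, even when the time variable is restricted to $[0,1]$ (see \cite[introduction on p.\,417]{NowakSjoegren2007}); this contradiction shows that $T$ cannot be ruc on $L^1(\bbR^d)$. The step I expect to be the main obstacle is the second one: the careful passage from the family of pointwise almost-everywhere inequalities (one null set per time $t$) to a single almost-everywhere bound for the maximal function over the uncountable set $(0,1]$. This is exactly the kind of interchange between order bounds and almost-everywhere suprema studied systematically in Section~\ref{section:almost-everywhere}, and the continuity of $t \mapsto (k_t \star f)(x)$ is what makes it work in the present situation.
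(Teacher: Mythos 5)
Your proof is correct, but after the common first step it takes a genuinely different route from the paper's. Both arguments begin identically: assume the semigroup is ruc, use the equivalence of~\ref{thm:characterization:itm:ruc} and~\ref{thm:characterization:itm:ob-unif} in Theorem~\ref{thm:characterization} together with Theorem~\ref{thm:unif-order-bdd} to obtain a constant $M$ and, for each $f$, an order bound $y$ with $\norm{y} \le M \norm{f}$ dominating $\{\modulus{T(t)f} : t \in [0,1]\}$. From there you deduce boundedness of the maximal operator $\calM f = \sup_{t \in (0,1]} \modulus{k_t \star f}$ on $L^1(\bbR^d)$ --- and your handling of the uncountable supremum, via a countable dense set of times plus joint continuity of $(t,x) \mapsto (k_t \star f)(x)$, is exactly the care this step requires --- and then invoke, as an external black box, the classical fact that this maximal operator is unbounded on $L^1$ even with time restricted to $[0,1]$. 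This is precisely the argument the paper \emph{sketches} in the paragraph preceding the proposition (with the same reference to Nowak--Sj\"ogren), but it is not the paper's written-out proof. That proof deliberately avoids relying on the harmonic-analysis input: it dualizes, viewing $L^1(\bbR^d)$ as a band in $C_0(\bbR^d)'$, applies the uniform order bounds to a net of normalized positive functions $f_j$ converging weak${}^*$ to $\delta_0$, extracts a weak${}^*$ limit $u$ of the corresponding bounds $u_j$, and concludes $0 \le \delta_0 \le Pu$ where $P$ is the band projection onto $L^1(\bbR^d)$ --- absurd, since a Dirac measure cannot be dominated by an absolutely continuous one. The trade-off: your route is shorter and makes the link between the ruc property and maximal functions explicit, but it outsources the genuinely hard analytic content (unboundedness of the \emph{local-in-time} heat maximal operator on $L^1$, which ultimately rests on Stein's $L\log L$ theory) to a citation; the paper's route is self-contained, needing only soft functional analysis (weak${}^*$ compactness, band projections, weak${}^*$ continuity of the adjoints) plus the elementary facts that $k_t \in L^1(\bbR^d)$ and $T(t)'\delta_0 \to \delta_0$ weak${}^*$ as $t \downarrow 0$.
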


\begin{proof}
	For each time $t \ge 0$ the dual operator $T(t)'$ on $L^\infty(\bbR^d)$ 
	(which also acts by convolution with the same kernel $k_t$ as $T(t)$)
	leaves the space $C_0(\bbR^d)$ 
	(which consists of the continuous functions that vanishes at infinity, endowed with the sup norm) 
	invariant and the restriction of the operator family $(T(t)')_{t \ge 0}$ to $C_0(\bbR^d)$, 
	which we now denote by $(S(t))_{t \ge 0}$, 
	is a $C_0$-semigroup on this space. 
	
	There is a canonical embedding of $L^1(\bbR^d)$ into the dual space $C_0(\bbR^d)'$ of $C_0(\bbR^d)$, 
	and this embedding is an isometric lattice homomorphism;
	in fact, if we identify $C_0(\bbR^d)'$ with the space of finite Borel measures on $\bbR^d$ 
	by means of the Riesz representation theorem, 
	then $L^1(\bbR^d)$ can be identified with the band within $C_0(\bbR^d)'$ 
	that consists of those measures that are absolutely continuous with respect to the Lebesgue measure.
	Moreover, it is not difficult to see that 
	there exists a net $(f_j)_{j \in J}$ of normalized positive elements of $L^1(\bbR^d)$ 
	which is weak${}^*$-convergent to $\delta_0 \in C_0(\bbR^d)'$, 
	where $\delta_0$ denotes the point evaluation at the point $0 \in \bbR^d$.
	
	Now assume towards a contradiction that the heat semigroup is ruc on $L^1(\bbR^d)$. 
	Then it follows from Theorem~\ref{thm:characterization}\ref{thm:characterization:itm:ruc} 
	and~\ref{thm:characterization:itm:ob-unif} together with Theorem~\ref{thm:unif-order-bdd} 
	that there exists a number $M \ge 0$ and positive vectors $u_j \in L^1(\bbR^d)$ of norm at most $M$ 
	for each $j \in J$ such that $0 \le T(t) f_j \le u_j$ for each $j \in J$ and each $t \in [0,1]$.
	When expressed in the dual space of $C_0(\bbR^d)$ this means that
	\begin{align*}
		0 \le S(t)' f_j \le u_j
	\end{align*}
	for each $j \in J$ and each $t \in [0,1]$.
	After switching to subnets we may assume that $(u_j)$ is weak${}^*$-convergent to an element 
	$0 \le u \in C_0(\bbR^d)'$ of norm $\norm{u} \le M$.
	Since the operator $S(t)'$ is, for each $t$, continuous with respect to the weak${}^*$-topology 
	and since the positive cone in $C_0(\bbR^d)'$ is weak${}^*$-closed,
	we thus obtain
	\begin{align*}
		0 \le S(t)' \delta_0 \le u
	\end{align*}
	for each $t \in [0,1]$.
	However, we have $S(t)'\delta_0 \in L^1(\bbR^d)$ for each $t > 0$
	(in fact, one has $S(t)' \delta_0 = k_t$, where $k_t \in L^1(\bbR^d)$ 
	denotes the heat kernel defined in Example~\ref{exa:heat-semigroup}).
	If $P$ denotes the band projection on $C_0(\bbR^d)'$ onto $L^1(\bbR^d)$, 
	it thus follows that 
	\begin{align*}
		0 \le S(t)' \delta_0 = P S(t)' \delta_0 \le P u
	\end{align*}
	for all $t \in (0,1]$.
	Since $(S(t))_{t \ge 0}$ is a $C_0$-semigroup on $C_0(\bbR^d)$ 
	we have $S(t)' \delta_0 \to \delta_0$ with respect to the weak${}^*$-topology as $t \downarrow 0$.
	Therefore, $0 \le \delta_0 \le Pu$; 
	but this is a contradiction since $Pu \in L^1(\bbR^d)$.
\end{proof}

\section{Unbounded order convergence and almost everywhere convergence of nets}
\label{section:almost-everywhere}

Recall that a net $(x_j)$ in a Banach lattice $E$ is said to be \emph{unboundedly order convergent}
to a point $x \in E$ if for every $u \in E_+$ the net $(\modulus{x_j - x} \land u)$ order converges to $0$.
So in particular, if the net $(x_j)$ is order convergent to $x$, 
then it is also unboundedly order convergent to $x$.
Some recent contributions to the study of unbounded order convergence include 
\cite{DabboorasadEmelyanovMarabeh2018, LiChen2018, Taylor2019}.

Now consider an analytic positive $C_0$-semigroup $T = (T(t))_{t \ge 0}$ on an $L^p$-space 
over a $\sigma$-finite measure space.
It follows from the local Taylor series expansion 
of the semigroup that for every $f \in L^p$ representatives of the vectors $T(t)f$ 
can be simultaneously chosen for all $t \in [0,\infty)$ in a way that ensures 
that the orbit of $f$ is pointwise continuous on the time interval $(0,\infty)$; 
see for instance \cite[the lemma on p.\,72]{Stein1970} for details.
Since, for sequences in $L^p$, it is well-known (and easy to see) 
that unbounded order convergence is equivalent to almost everywhere convergence, 
one concludes that ruc continuity of analytic semigroups on $L^p$ implies 
that each orbit can be chosen to be pointwise continuous 
even on the time interval $[0,\infty)$, i.e.\ including the time $0$.
So for instance the heat semigroup considered in 
Example~\ref{exa:heat-semigroup}\ref{exa:heat-semigroup:itm:reflexive} has this property.
This almost everywhere continuity property of semigroups was also discussed 
in \cite{Stein1961} and was described in that paper as the motivation to prove the maximal inequality 
that was used in Example~\ref{exa:heat-semigroup}\ref{exa:heat-semigroup:itm:reflexive} above. 

For semigroups $T$ that are not analytic it is not clear under which conditions it is possible 
to choose the representatives of all vectors in an orbit $t \mapsto T(t)f$ in a way which gives, say, 
at least the measurability of the function $t \mapsto (Tf)(\omega)$ for almost all $\omega$.
Still, it is desirable to interpret the order continuity property that occurs in Theorem~\ref{thm:characterization}\ref{thm:characterization:itm:oc} 
in a pointwise almost everywhere manner; 
to this end we prove the following result.

\begin{theorem}[Almost everywhere convergence of nets]
	\label{thm:almost-everywhere}
	Let $(\Omega,\mu)$ be a $\sigma$-finite measure space and $p \in [1,\infty)$. 
	Let $f \in L^p(\Omega,\mu)$, let $(f_j)_{j \in J}$ be a net in $L^p(\Omega,\mu)$, and assume that the index set $J$ 
	contains a co-final sequence $(j_n)_{n \in \bbN}$. 
	Then the following assertions are equivalent:
	\begin{enumerate}[label=\upshape(\roman*)]
		\item\label{thm:almost-everywhere:itm:uoc} 
		The net $(f_j)_{j \in J}$ is unboundedly order convergent to $f$. 
		
		\item\label{thm:almost-everywhere:itm:ae} 
		There exists a representative $\hat f: \Omega \to \bbR$ of $f$ and a family $(\hat f_j)_{j \in J}$ of representatives $\hat f_j: \Omega \to \bbR$ of $f_j$ such that
		\begin{align*}
			\lim_j \hat f_j(\omega) = \hat f(\omega)
		\end{align*}
		for all $\omega \in \Omega$.
	\end{enumerate}
\end{theorem}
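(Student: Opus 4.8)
The plan is to prove the two implications separately, using the co-final sequence $(j_n)_{n\in\bbN}$ to replace the uncountable tails of the net by countable data. Two structural features of $E:=L^p(\Omega,\mu)$ are exploited: it is Dedekind complete and has the \emph{countable sup property} (every order bounded family admits a countable subfamily with the same supremum), and, by $\sigma$-finiteness, it has a weak order unit $w$ with $w>0$ almost everywhere. Before starting I would normalise the situation: by directedness one may pass to an increasing co-final subsequence, still written $(j_n)$; I fix $w$, set $u_m:=mw$, and fix representatives $\hat w$, $\hat u_m=m\hat w$. I shall use two standard facts: for a decreasing \emph{sequence} in $E$ the order infimum coincides with the pointwise almost-everywhere infimum of any representatives; and, by the definition of order convergence recalled in the introduction, a net $(x_j)$ order converges to $0$ as soon as some decreasing net dominates its tails and has infimum $0$.

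For the implication \ref{thm:almost-everywhere:itm:ae}$\Rightarrow$\ref{thm:almost-everywhere:itm:uoc}, assume representatives with $\hat f_j(\omega)\to\hat f(\omega)$ for every $\omega$, fix $u\in E_+$, and set $g_j:=\modulus{f_j-f}\wedge u\in[0,u]$. Since $E$ is Dedekind complete, $h_N:=\sup_{j\succeq j_N}g_j$ exists, $(h_N)_N$ is decreasing, and $g_j\le h_N$ for all $j\succeq j_N$. If I can show $h_N\downarrow 0$, then the decreasing sequence $(h_N)$ witnesses $g_j\goesorder 0$ (at stage $N$ take the index $j_N$), which is exactly unbounded order convergence as $u$ was arbitrary. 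To get $h_N\downarrow 0$ I would invoke the countable sup property to write $h_N=\sup_{j\in C_N}g_j$ with $C_N\subseteq\{j:\,j\succeq j_N\}$ countable; then $\hat h_N(\omega)=\sup_{j\in C_N}\hat g_j(\omega)$ off a null set, and the pointwise convergence $\hat g_j(\omega)\to 0$ valid at \emph{every} $\omega$ yields, for each $\varepsilon>0$, an index $j_\ast$ with $\hat g_j(\omega)<\varepsilon$ for $j\succeq j_\ast$; choosing $N$ with $j_N\succeq j_\ast$ gives $\hat h_N(\omega)\le\varepsilon$. Hence $\inf_N\hat h_N=0$ almost everywhere, i.e.\ $h_N\downarrow 0$.

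The implication \ref{thm:almost-everywhere:itm:uoc}$\Rightarrow$\ref{thm:almost-everywhere:itm:ae} is the substantial one. Put $g_j:=\modulus{f_j-f}$ and, for each $m$, $h^{(m)}_N:=\sup_{j\succeq j_N}(g_j\wedge u_m)\in[0,u_m]$, a decreasing sequence in $N$. Unwinding $(g_j\wedge u_m)\goesorder 0$ together with cofinality (given the witnessing net $(y_h)\downarrow 0$, pick $j_N$ above the index from which $g_j\wedge u_m\le y_h$) gives $h^{(m)}_N\le y_h$ for suitable $N$, hence $h^{(m)}_N\downarrow 0$. I fix representatives $\hat h^{(m)}_N$ that are decreasing in $N$ everywhere and, off a common null set $N_0\supseteq\{\hat w=0\}$, satisfy $\hat h^{(m)}_N(\omega)\downarrow 0$ for every $m$. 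I then fix $\hat f$ and build the $\hat f_j$ index by index. If $n_0(j):=\sup\{n:\,j_n\preceq j\}=\infty$, then $g_j\wedge u_m\le\inf_N h^{(m)}_N=0$ for all $m$, so $f_j=f$ and I set $\hat f_j:=\hat f$; if $\{n:\,j_n\preceq j\}=\varnothing$, I pick an arbitrary representative, since such $j$ lie in no tail $\{j'\succeq j_N\}$; in the remaining case $n_0(j)\in\bbN$, the inequality $g_j\wedge u_m\le h^{(m)}_{n_0(j)}$ holds (because $j\succeq j_{n_0(j)}$), so the countably many a.e.\ inequalities over $m$ hold off one null set $M_j$, and I modify the representative of $f_j$ on $M_j$ (setting it equal to $\hat f$ there) so that, with $\hat g_j:=\modulus{\hat f_j-\hat f}$, the estimate $\hat g_j(\omega)\wedge\hat u_m(\omega)\le\hat h^{(m)}_{n_0(j)}(\omega)$ holds for \emph{every} $\omega$ and every $m$. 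Finally I redefine $\hat f$ and all $\hat f_j$ to be $0$ on $N_0$.

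With these representatives fixed, pointwise convergence everywhere is verified as follows. On $N_0$ it is trivial. For $\omega\notin N_0$ and $\varepsilon>0$, since $\hat w(\omega)>0$ I choose $m$ with $\hat u_m(\omega)>\varepsilon$ and then $n$ with $\hat h^{(m)}_n(\omega)<\varepsilon$; for any $j\succeq j_n$ either $f_j=f$ (so $\hat g_j(\omega)=0$) or $n\le n_0(j)\in\bbN$, whence $\hat h^{(m)}_{n_0(j)}(\omega)\le\hat h^{(m)}_n(\omega)<\varepsilon$ and the baked-in estimate forces $\hat g_j(\omega)<\varepsilon$ (using $\hat u_m(\omega)>\varepsilon$). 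Thus $j_n$ is an index beyond which $\modulus{\hat f_j(\omega)-\hat f(\omega)}<\varepsilon$, giving $\lim_j\hat f_j(\omega)=\hat f(\omega)$. I expect the main obstacle to be precisely that each order-theoretic inequality is an almost-everywhere statement with its own null set, while there are uncountably many indices $j$, so a naive intersection of null sets fails; the resolution is to push each per-index null set $M_j$ into the definition of the representative $\hat f_j$ (making the estimate hold everywhere for each fixed $j$) and to use the co-final sequence to turn the controlling suprema $h^{(m)}_N$ into decreasing \emph{sequences}, whose order infimum is the genuine pointwise infimum off a single null set.
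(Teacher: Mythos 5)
Your proposal is correct, and it splits into two halves that compare differently with the paper's own proof. On the substantial direction \ref{thm:almost-everywhere:itm:uoc}$\Rightarrow$\ref{thm:almost-everywhere:itm:ae} you follow essentially the paper's strategy: form the decreasing sequence of tail suprema over $\{j : j \succeq j_N\}$ (they exist by Dedekind completeness and decrease to $0$ in order precisely because of co-finality), and then absorb each per-index null set $M_j$ into the choice of representative by redefining $\hat f_j := \hat f$ on $M_j$ --- this is exactly the paper's key trick for coping with uncountably many almost-everywhere inequalities. The differences are organizational: the paper partitions $J$ into layers $J_n = \{j : j \succeq j_n, \; j \not\succeq j_{n+1}\}$ where you use the function $n_0(j)$; it dismisses the case of a maximal index at the outset, whereas you handle indices dominating the whole sequence inline by showing $f_j = f$ (equally valid); and it works with a single quasi-interior point $u$ whose representative is chosen strictly positive at \emph{every} point of $\Omega$, which makes your sequence of caps $u_m = m w$ and the exceptional set $N_0$ unnecessary. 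On the converse direction \ref{thm:almost-everywhere:itm:ae}$\Rightarrow$\ref{thm:almost-everywhere:itm:uoc}, however, your route genuinely differs: the paper proves a standalone result (Lemma~\ref{lem:co-final-sequence}, valid for order bounded nets in any Banach lattice with order continuous norm, proved by approximating suprema by finite suprema in norm) which reduces order convergence of the net to order convergence along all co-final sequences, and then quotes the sequential fact that almost everywhere convergence implies unbounded order convergence; you instead argue directly in $L^p$ via the countable sup property, showing that each tail supremum $h_N$ is a countable supremum and hence has pointwise values that become small, so that the sequence $(h_N)$ itself witnesses order convergence of the net. Your argument is shorter and self-contained for $L^p$; the paper's detour buys a lemma of independent interest beyond $L^p$-spaces. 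Only two harmless normalizations should be made explicit in a polished write-up: choose $\hat w \ge 0$ and the representatives $\hat h^{(m)}_N \ge 0$ everywhere (not merely off $N_0$), so that the baked-in inequalities hold trivially at the points where you overwrite $\hat f_j$ by $\hat f$.
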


Note that it does not make any difference if one replaces ``for all $\omega \in \Omega$'' 
with ``for almost all $\omega \in \Omega$'' in part~\ref{thm:almost-everywhere:itm:ae}.
However, we deliberately chose the wording ``for all'' in order to stress 
that one speaks about functions $\Omega \to \bbR$ (rather than their almost everywhere equivalence classes) 
in~\ref{thm:almost-everywhere:itm:ae}.
In contrast to the case of sequences 
the existence quantifier over the representatives $\hat f_j$ 
is crucial in~\ref{thm:almost-everywhere:itm:ae}: 
if one changes each of the representatives $\hat f_j$ on a set of measure $0$, 
it might happen that $\hat f_j(\omega)$ does not converge to $f(\omega)$ for any $\omega \in \Omega$ 
since the index set $J$ can be uncountable.

For the proof of Theorem~\ref{thm:almost-everywhere} 
we use the following auxiliary result.

\begin{lemma}
	\label{lem:co-final-sequence} 
	Let $E$ be a Banach lattice with order continuous norm, let $x \in E$, 
	let $(x_j)_{j \in J}$ be an order bounded net in $E$ 
	and assume that there exists a co-final sequence $(j_n)_{n \in \bbN}$ in $J$.
	Then the following are equivalent:
	\begin{enumerate}[label=\upshape(\roman*)]
		\item\label{lem:co-final-sequence:itm:net} 
		The net $(x_j)_{j\in J}$ order converges to $x$.
		
		\item\label{lem:co-final-sequence:itm:sequences} 
		For every co-final sequence $(i_n)_{n \in \bbN}$ in $J$ 
		the sequence $(x_{i_n})_{n \in \bbN}$ order converges to $x$.
	\end{enumerate}
\end{lemma}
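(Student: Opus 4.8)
The plan is to reduce both implications to a single description of order convergence for order bounded nets. Since the norm on $E$ is order continuous, $E$ is Dedekind complete, so for each $j \in J$ the supremum $p_j := \sup_{k \succeq j} \modulus{x_k - x}$ exists; indeed the set $\{\modulus{x_k - x} : k \succeq j\}$ is order bounded because $(x_j)$ is and $\modulus{x_k - x} \le \modulus{x_k} + \modulus{x}$. The net $(p_j)_{j \in J}$ is decreasing, and straight from the definition of order convergence one sees that $x_j \goesorder x$ if and only if $p_j \downarrow 0$, i.e.\ $p := \inf_j p_j = 0$. As the norm is order continuous, $p_j \downarrow p$ forces $\norm{p_j} \to \norm{p}$, so order convergence is moreover equivalent to $\norm{p_j} \to 0$; the analogous description holds for sequences. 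Throughout I use only one property of a co-final sequence $(i_n)$: that it is eventually beyond every index, i.e.\ for each $j \in J$ one has $i_n \succeq j$ for all large $n$ (this holds in particular for any increasing co-final sequence, and makes $(x_{i_n})_n$ a subnet of $(x_j)_j$).

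For \ref{lem:co-final-sequence:itm:net} $\Rightarrow$ \ref{lem:co-final-sequence:itm:sequences} -- the easy direction -- suppose $x_j \goesorder x$, witnessed by a decreasing net $(y_h)_{h} \downarrow 0$ with the usual tail domination. Given a co-final sequence $(i_n)$ and an index $h$, I first pick $j_0$ with $\modulus{x_k - x} \le y_h$ for all $k \succeq j_0$, and then $N$ with $i_n \succeq j_0$ for all $n \ge N$; hence $\modulus{x_{i_n} - x} \le y_h$ for $n \ge N$. Thus the same net $(y_h)_h$ witnesses $x_{i_n} \goesorder x$.

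The substance is the converse, which I would prove by contraposition. Assume $x_j \not\goesorder x$, so $p \ne 0$. I may assume the given co-final sequence $(j_n)$ to be increasing (replacing $j_{n+1}$ by an upper bound of $j_n$ and $j_{n+1}$). Fix $r \in \bbN$. Since $p \le p_{j_r} = \sup_{k \succeq j_r} \modulus{x_k - x}$ and the finite joins $\bigvee_{k \in F} \modulus{x_k - x}$ over finite $F \subseteq \{k : k \succeq j_r\}$ increase to $p_{j_r}$, order continuity of the norm yields norm convergence; using continuity of the lattice operations and $p \wedge p_{j_r} = p$, I may therefore choose a finite set $F_r \subseteq \{k : k \succeq j_r\}$ with $\norm{p - w_r} < 1/r$, where $w_r := \bigvee_{k \in F_r}\bigl(p \wedge \modulus{x_k - x}\bigr)$, so that $\norm{w_r} \ge \norm{p} - 1/r$. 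Now I concatenate $F_1, F_2, \dots$ into a single sequence $(i_m)_{m}$. As every element of $F_r$ dominates $j_r$ and $(j_n)$ is increasing and co-final, $(i_m)$ is again a co-final sequence (eventually beyond every index). If $N$ marks the start of the $r$-th block, then $\{i_m : m \ge N\} \supseteq F_r$, so $\sup_{m \ge N}\bigl(p \wedge \modulus{x_{i_m} - x}\bigr) \ge w_r$ has norm at least $\norm{p} - 1/r$. Writing $\tilde p := \inf_N \sup_{m \ge N}\bigl(p \wedge \modulus{x_{i_m} - x}\bigr)$ and applying order continuity to this decreasing net of (order bounded) tail suprema gives $\norm{\tilde p} \ge \norm{p} > 0$. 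Since $\sup_{m \ge N}\modulus{x_{i_m} - x} \ge \sup_{m \ge N}\bigl(p \wedge \modulus{x_{i_m} - x}\bigr)$, the sequence $(x_{i_m})_m$ does not order converge to $x$, contradicting \ref{lem:co-final-sequence:itm:sequences}.

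The hard part is precisely this construction. The difficulty is that the approximate equality $p_{j_r} \approx p$ on a fixed tail generally cannot be witnessed by a single index: one should keep in mind disjointly supported bumps in an $L^p$-space, where the supremum over the tail is large while every individual term is small. Order continuity of the norm is exactly the tool that lets me replace the exact tail supremum by a finite join which is norm-close, and the existence of a co-final sequence is what lets me bundle these countably many finite blocks into one honest sequence. The remaining point to verify carefully is that the assembled sequence really is a co-final sequence in the required sense; in the totally ordered index sets of interest (such as $(0,\varepsilon]$ with the reversed order) one can in addition arrange it to be increasing, by choosing each new checkpoint $j_{r+1}$ beyond the finite block $F_r$ already selected.
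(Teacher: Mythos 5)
Your argument is correct and follows essentially the same route as the paper's proof: in the hard direction both arguments proceed by contraposition, use order continuity of the norm to replace the (nonzero) tail suprema by norm-close finite joins over sets $F_r$ contained in the tail $\{k : k \succeq j_r\}$, and concatenate the blocks $F_1, F_2, \dots$ into a co-final sequence along which order convergence fails; your explicit ``eventually beyond every index'' reading of co-finality is exactly the property the paper tacitly uses on both sides of the equivalence. The only real difference is bookkeeping in the final step: to certify that the assembled sequence does not order converge you truncate at $p$ and pass to norm limits of the decreasing tail suprema, whereas the paper normalizes to $x = 0$, $x_j \ge 0$, $\lVert y \rVert = 1$ and shows $z := \bigwedge_n \bigvee_{j \in F_n} x_j \neq 0$ via an explicit lattice inequality with geometric error terms $2^{-(n+1)}$.
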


\begin{proof}
	\impliesProof{lem:co-final-sequence:itm:net}{lem:co-final-sequence:itm:sequences}
	This implication is clear.
	
	\impliesProof{lem:co-final-sequence:itm:sequences}{lem:co-final-sequence:itm:net}
	We may, and shall, assume that $x = 0$ and $x_j \ge 0$ for each $j \in J$. 
	For every $n \in \bbN$ we define $y_n := \bigvee_{j \succeq j_n} x_j \in E_+$; 
	moreover, let $y := \bigwedge_{n \in \bbN} y_n \in E_+$. 
	It suffices to show that $y = 0$, so assume the contrary. 
	After rescaling all elements we may than assume that $\norm{y} = 1$. 
	For every $n \in \bbN$ there exists a finite non-empty set $F_n \subseteq J$ 
	such that all $j \in F_n$ satisfy $j \succeq j_n$ and such that 
	$\norm{y_n - \bigvee_{j \in F_n} x_j} \le \frac{1}{2^{n+1}}$;
	this follows from the order continuity of the norm on $E$.
	We claim that the positive vector
	\begin{align*}
		z 
		:= 
		\bigwedge_{n \in \bbN} \bigvee_{j \in F_n} x_j
	\end{align*}
	is non-zero. 
	Indeed, since $y_n \ge y$ for each $n$, it follows that
	\begin{align*}
		z
		\ge
		\bigwedge_{n \in \bbN}\Big( y - \big(y_n - \bigvee_{j \in F_n} x_j\big) \Big) 
		=
		y - \bigvee_{n \in \bbN} \big(y_n - \bigvee_{j \in F_n} x_j\big). 
	\end{align*}
	Since $z \ge 0$ we conclude that
	\begin{align*}
		z
		\ge 
		\Big(y - \bigvee_{n \in \bbN} \big(y_n - \bigvee_{j \in F_n} x_j\big)\Big)^+
		= 
		y - \bigvee_{n \in \bbN} y \land \Big(y_n - \bigvee_{j \in F_n} x_j\Big) \ge 0.
	\end{align*}
	As one has the norm estimate
	\begin{align*}
		\Big\lVert\bigvee_{n \in \bbN} y \land \big(y_n - \bigvee_{j \in F_n} x_j\big) \Big\rVert 
		\le 
		\sum_{n \in \bbN} \big\lVert y_n - \bigvee_{j \in F_n} x_j \big\rVert
		\le 
		\frac{1}{2}
	\end{align*}
	and as $\norm{y} = 1$, it follows that $z \not= 0$, as claimed.
	
	From this one can easily construct a co-final sequence $(i_n)_{n \in \bbN}$ in $J$  
	such that $(x_{i_n})_{n \in \bbN}$ is not order convergent to $0$: 
	first list all elements of $F_1$ in arbitrary order, then all elements of $F_2$ in arbitrary order, and so on.
	The resulting sequence $(i_n)_{n \in \bbN}$ in $J$ has all required properties.
\end{proof}

\begin{proof}[Proof of Theorem~\ref{thm:almost-everywhere}]
	\impliesProof{thm:almost-everywhere:itm:uoc}{thm:almost-everywhere:itm:ae} 
	If there exists an index $\hat j \in J$ that satisfies $\hat j \succeq j$ for each $j \in J$, 
	then the claimed implication is trivial, so assume from now on that such a $\hat j$ does not exist.
	Thus we may, and shall, assume throughout the proof that, for each $n \in \bbN$, 
	one has $j_{n+1} \succeq j_n$ and $j_n \not \succeq j_{n+1}$.
	
	We choose an arbitrary representative $\hat f: \Omega \to \bbR$ of $f$. 
	As $(\Omega, \mu)$ is $\sigma$-finite, there exists a quasi-interior point $0 \le u \in L^p(\Omega)$;
	we can find a representative $\hat u: \Omega \to \bbR$ of $u$ which satisfies $\hat u(\omega) > 0$ 
	for all $\omega \in \Omega$.
	Due to~\ref{thm:almost-everywhere:itm:uoc} one has $\modulus{f_j-f} \land u \goesorder 0$.
	For every $n \in \bbN$, define 
	\begin{align*}
		g_n := \bigvee_{j \succeq j_n} \modulus{f_j-f} \land u,
	\end{align*}
	where the supremum is taken within the order complete vector lattice $L^p(\Omega,\mu)$.
	Then the sequence $g_n$ is decreasing and has infimum $0$.
	We can choose representatives $\hat g_n: \Omega \to [0,\infty)$ of the vectors $g_n$ such
	that the real sequence $\big(\hat g_n(\omega)\big)_{n \in \bbN}$ 
	is decreasing and converges to $0$ for every $\omega \in \Omega$.
	
	Now define subsets $J_n$ of $J$ for each $n \in \bbN_0$ by setting
	\begin{align*}
		J_n 
		:= 
		\big\{
			j \in J \suchthat j \succeq j_n \text{ and } j \not\succeq j_{n+1}
		\big\}
	\end{align*}
	for each $n \ge 1$ and $J_0 := J \setminus \bigcup_{n \in \bbN} J_n$. 
	Then $J$ is the disjoint union of the family $(J_n)_{n \in \bbN_0}$ 
	and we have $j_n \in J_n$ for each $n \in \bbN$.
	
	Now we choose the representatives $\hat f_j$ as follows: fix $j \in J$.
	If $j \in J_0$, let $\hat f_j: \Omega \to \bbR$ be an arbitrary representative of $f_j$.
	If $j \in J_n$ for some $n \ge 1$, then $g_n \ge \modulus{f_j-f} \land u$.
	First choose an arbitrary representative $\tilde f_j$ of $f_j$. 
	Then we have $\hat g_n \ge \modulus{\tilde f_j- \hat f} \land \hat u$ on $\Omega \setminus N_j$, 
	where $N_j \subseteq \Omega$ is some measurable set of measure $0$.
	For all $\omega \in N_j$ we now change the value $\tilde f_j(\omega)$ 
	and set it to $\hat f(\omega)$ instead; 
	this yields a new representative of $f_j$ which we call $\hat f_j$.
	Since $\hat g_n$ we chosen to take values in $[0,\infty)$ only, 
	it follows that $\hat g_n \ge \modulus{\hat f_j- \hat f} \land \hat u$ everywhere on $\Omega$.
	
	Consider a single point $\omega \in \Omega$ now and let $\varepsilon > 0$. 
	There exists $n \in \bbN$ such that $g_n(\omega) \le \varepsilon$. 
	For every $j \in J$ that satisfies $j \succeq j_n$ there exists an integer $m \ge n$ 
	such that $j \in J_m$; 
	hence,
	\begin{align*}
		\modulus{\hat f_j(\omega) - \hat f(\omega)} \land \hat u(\omega) 
		\le 
		g_m(\omega) 
		\le
		g_n(\omega) 
		\le 
		\varepsilon.
	\end{align*}
	This proves that $\modulus{\hat f_j(\omega) - \hat f(\omega)} \land \hat u(\omega) \to 0$.
	As we have $\hat u(\omega) > 0$ due to the choice of $\hat u$, 
	this implies that $\hat f_j(\omega) \to \hat f(\omega)$, as claimed. 
	
	\impliesProof{thm:almost-everywhere:itm:ae}{thm:almost-everywhere:itm:uoc} 
	For every co-final sequence $(i_n)$ in $J$ and every vector $0 \le u \in L^p(\Omega,\mu)$ 
	it is a well-known consequence of~\ref{thm:almost-everywhere:itm:ae}
	that $\modulus{f_{i_n} - f} \land u$ is order convergent to $0$.
	It thus follows from Lemma~\ref{lem:co-final-sequence} 
	that the net $\big(\modulus{f_{j} - f} \land u\big)_{j \in J}$ 
	order converges to $0$;
	this proves~\ref{thm:almost-everywhere:itm:uoc}.
\end{proof}

Applying Theorem~\ref{thm:almost-everywhere} to ruc semigroups on $L^p$-spaces 
yields the following pointwise continuity result.

\begin{corollary}[Almost everywhere continuity for ruc semigroups on $L^p$]
	\label{cor:ae-continuous}
	Let $(\Omega,\mu)$ be a $\sigma$-finite measure space and $p \in [1,\infty)$.
	Let $T = (T(t))_{t \geq 0}$ be a positive $C_0$-semigroup on $L^p(\Omega,\mu)$ 
	which is relatively uniformly continuous and let $f \in L^p(\Omega,\mu)$.
	
	Then there exists a representative $\hat f: \Omega \to \bbR$ of the vector $f$ 
	and a family $(\widehat{T(t)f})_{t \in (0,\infty)}$ 
	of representatives $\widehat{T(t)f}: \Omega \to \bbR$ of the orbit vectors $T(t)f$ such that
	\begin{align*}
		\lim_{t \downarrow 0} \widehat{T(t)f}(\omega) = \widehat{f}(\omega)
	\end{align*}
	for all $\omega \in \Omega$.
\end{corollary}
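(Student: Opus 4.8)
The plan is to derive the corollary as a direct application of Theorem~\ref{thm:almost-everywhere} to the orbit net $(T(t)f)_{t \in (0,\infty)}$, where the index set $(0,\infty)$ is directed conversely to the order inherited from $\bbR$, so that the net limit is exactly the limit as $t \downarrow 0$. Before invoking the theorem I would first check that this index set contains a co-final sequence in the sense required there: the sequence $(1/n)_{n \in \bbN}$ is co-final, since for every $t > 0$ there is an $n$ with $1/n \le t$, which in the reversed order on $(0,\infty)$ means precisely $1/n \succeq t$. Thus the hypothesis on the index set is satisfied, and $f$ together with the orbit net $(T(t)f)$ fits the template of Theorem~\ref{thm:almost-everywhere}.

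Next I would verify the one remaining hypothesis, namely that the net $(T(t)f)_{t \in (0,\infty)}$ is unboundedly order convergent to $f$ as $t \downarrow 0$. Since $T$ is relatively uniformly continuous by assumption, the implication \impliesProof{thm:characterization:itm:ruc}{thm:characterization:itm:oc} of Theorem~\ref{thm:characterization} gives $T(t)f \goesorder f$ as $t \downarrow 0$; that is, the orbit net order converges to $f$. As recalled at the beginning of this section, order convergence implies unbounded order convergence to the same limit, so the net $(T(t)f)$ is unboundedly order convergent to $f$. This is exactly assertion~\ref{thm:almost-everywhere:itm:uoc} of Theorem~\ref{thm:almost-everywhere} for our net.

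With both hypotheses in place, I would apply the implication \impliesProof{thm:almost-everywhere:itm:uoc}{thm:almost-everywhere:itm:ae} of Theorem~\ref{thm:almost-everywhere} to the net $(T(t)f)_{t \in (0,\infty)}$. This yields a representative $\hat f \colon \Omega \to \bbR$ of $f$ and a family $(\widehat{T(t)f})_{t \in (0,\infty)}$ of representatives of the orbit vectors such that $\widehat{T(t)f}(\omega) \to \hat f(\omega)$ along the net for every $\omega \in \Omega$; since the net is indexed by $(0,\infty)$ in the reversed order, this convergence is precisely $\lim_{t \downarrow 0} \widehat{T(t)f}(\omega) = \hat f(\omega)$ for all $\omega \in \Omega$, which is the claim of the corollary.

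Because every step is a direct invocation of an already-established result, I do not expect a genuine obstacle here; the proof is essentially a matter of recognizing that the orbit net as $t \downarrow 0$ is covered by Theorem~\ref{thm:almost-everywhere}. The only point requiring a moment's care is the bookkeeping around the directed index set $(0,\infty)$ with its reversed order together with the verification that $(1/n)_{n}$ furnishes the required co-final sequence, so that all hypotheses of the theorem are literally met.
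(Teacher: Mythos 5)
Your proof is correct and follows exactly the route the paper intends: the corollary is stated as a direct application of Theorem~\ref{thm:almost-everywhere}, with unbounded order convergence of the orbit net supplied by Theorem~\ref{thm:characterization} and the fact that order convergence implies unbounded order convergence. Your explicit verification of the co-final sequence $(1/n)_{n\in\bbN}$ in the reversely ordered index set $(0,\infty)$ is the right bookkeeping and fills in the detail the paper leaves tacit.
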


\subsection*{Acknowledgements} 

The results in Sections~\ref{section:order-reg-sg} and~\ref{section:almost-everywhere} 
of this article are based upon work from COST Action CA18232 MAT-DYN-NET, 
supported by COST (European Cooperation in Science and Technology).
Theorem~\ref{thm:characterization} and Lemma~\ref{lem:ru-continuous-on-domain} 
are, up to minor changes, contained in the second-named author's doctoral dissertation at the University of Ljubljana 
\cite[Theorem~3.13 and Proposition~3.11]{Kaplin2020}.

The authors are indebted to Sahiba Arora and to the referee for pointing out 
a number of inaccuracies in an earlier version of the manuscript.

\bibliographystyle{plain}
\bibliography{literature}

\end{document}